\theoremstyle{plain}
\definecolor{codegreen}{rgb}{0, 0.6, 0}
\definecolor{codegray}{rgb}{0.5, 0.5, 0.5}
\definecolor{codepurple}{rgb}{0.58, 0, 0.82}
\definecolor{backcolour}{rgb}{0.95, 0.95, 0.92}
\lstdefinelanguage{Macaulay2}
{
xleftmargin=.2in, 
xrightmargin=.2in, 
basicstyle={\ttfamily}, 
keywordstyle={\color{blue}}, 
commentstyle={\color{codegreen}}, 
stringstyle={\color{red!40!black}}, 
rulecolor=\color{yellow}, 
basewidth={1.2ex}, 
sensitive=false, 
morecomment=[l]{--}, 
morecomment=[s]{-*}{*-}, 
morestring=[b]", 
escapechar={`}, 
escapebegin={\rmfamily}, 
morekeywords={load, random, degree, genus, topComponents, ideal, Ext, minors, quotient, intersect, map, kernel, preimage, codim, sheaf, matrix, hilbertPolynomial, Projective, false, sheafExt, ann, cooker, flatten, gens, entries, basis, apply, return, select, subsets, sort, unique, then, else, toList, number, max, sum, det,join,min,and }
}
\newtheorem{theorem}{Theorem}[section]
\newtheorem{MainTheorem}{Theorem}
\newtheorem{corollary}[theorem]{Corollary}
\newtheorem{lemma}[theorem]{Lemma}
\newtheorem{proposition}[theorem]{Proposition}
\theoremstyle{definition}
\newtheorem{definition}{Definition}[section]
\newtheorem{remark}[definition]{Remark}
\newcommand{\Log}{\textsf{Log}}
\newcommand{\LPB}{\textsf{LPB}}
\newcommand{\Folx}[3]{{\sf Fol}^{#2}(#1, #3 )} 
\newcommand{\mb}{\mathbb}
\newcommand{\mc}{\mathcal}
\newcommand{\C}{\mb C}
\newcommand{\Z}{\mb Z}
\newcommand{\extp}{\textstyle\bigwedge}
\newcommand{\lra}{\longrightarrow}
\newcommand{\tang}{\mathrm{tang}}
\DeclareMathOperator{\im}{Im}
\DeclareMathOperator{\codim}{codim}
\DeclareMathOperator{\sing}{sing}
\DeclareMathOperator{\Pic}{Pic}
\DeclareMathOperator{\Mor}{Mor}
\DeclareMathOperator{\rest}{rest}
\DeclareMathOperator{\rk}{rk}
\newcommand{\Pj}{\mb P}
\newcommand{\pn}{{\mb{P}^{n}}}
\newcommand{\p}[1]{{\mb{P}^{#1}}}
\newcommand{\op}[1]{{\mc O}_{\mb{P}^{#1}}}
\newcommand{\opn}{{\mc O}_{\mb{P}^{n}}}
\newcommand{\om}[2]{\Omega_{\mb{P}^{#2}}^{#1}}
\newcommand{\omn}[1]{\Omega_{\mb{P}^{n}}^{#1}}
\newcommand{\qn}{{Q^{n}}}
\newcommand{\q}[1]{{Q^{#1}}}
\newcommand{\oq}[1]{{\mc O}_{Q^{#1}}}
\newcommand{\TX}{{TX}}
\newcommand{\OX}{{\mathcal O}_X}
\newcommand{\omx}[1]{\Omega_{X}^{#1}}
\newcommand\restr[2]{{
  \left.\kern-\nulldelimiterspace 
  #1 
  \vphantom{\big|} 
  \right|_{#2} 
  }}
\def\OO{{\mathcal{O}}}
\def\CC{{\mathbb{C}}}
\def\ZZ{{\mathbb{Z}}}
\newcommand{\F}{\mc F}
\newcommand{\G}{\mc G}
\newcommand{\Ho}{\mc H}
\newcommand{\D}{\mc D}
\newcommand{\TF}{{T_\F}}
\newcommand{\TG}{{T_\G}}
\newcommand{\nF}{{N_\F}}
\newcommand{\nG}{{N_\G}}
\newcommand{\nH}{{N_{\Ho}}}
\newcommand{\aut}{ \mathfrak{aut}}
\newcommand{\fix}{ \mathfrak{fix}}
\newcommand{\rad}{{\rm rad}} 
\newcommand{\GL}{\rm GL}
\newcommand{\del}[1]{\frac{\partial}{\partial #1}}
\numberwithin{equation}{section}
\numberwithin{equation}{section}       
\title[Degree-one foliations on complete intersections]{Degree-one foliations on complete intersections}
\author[M. Figueira]{Mateus Figueira}
\address{Departamento de An\'alise Matem\'atica \\ Instituto de Matemática e Estatística \\ Universidade do Estado do Rio de Janeiro \\ Rua São Francisco Xavier, 524, Maracanã, 20550-900, Rio de Janeiro, Brazil. }
\email{mateus.figueira@uerj.br}
\author[C. Kuster]{Crislaine Kuster}
\address{ IMPA, Estrada Dona Castorina 110, Rio de
  Janeiro, 22460-320, Brazil }
\address{Institut de Mathématiques de Bourgogne, UMR 5584 CNRS,
Université de Bourgogne, 9 Avenue Alain Savary,
F-21000, Dijon, France}
\email{crislainekeizy@gmail.com}
\author[R. Lizarbe]{Ruben Lizarbe}
\address{Departamento de Estrutura Matem\'atica \\ Instituto de Matemática e Estatística \\ Universidade do Estado do Rio de Janeiro \\ Rua São Francisco Xavier, 524, Maracanã, 20550-900, Rio de Janeiro, Brazil. }
\email{ruben.monje@ime.uerj.br}
\author[A. Muniz]{Alan Muniz}
\address{Departamento de Matem\'atica \\ Centro de Ci\^encias Exatas e da Natureza \\ Universidade Federal de Pernambuco \\ Recife - PE, CEP 50740-560, Brasil}
\email{alan.nmuniz@ufpe.br}
\date{July 2026}
\subjclass[2020]{Primary: 37F75, 32S65; Secondary: 14M25}
\keywords{holomorphic foliations, holomorphic distributions, complete intersections, hypersurfaces, extensions, restrictions}
\thanks{
{We warmly thank the anonymous referee for many comments and corrections that helped us improve our work.} Figueira was partially supported by a postdoctoral fellowship from the CAPES-PIBID program in Mathematics at the Universidade Federal Fluminense (UFF). Kuster was supported by CNPq Grant Number 140986/2021-9 and  CAPES - PDSE - Programa de Doutorado Sanduíche no Exterior Grant Number 88881.846472/2023-01.  Lizarbe was supported by CNPq/MCTI/FNDCT No.\ 18/2021 (``Global geometry of connections, holomorphic webs and foliations,'' Grant Number 402936/2021-3) and also by CNPq/MCTI No.\ 10/2023 – Edital Universal (``Geometria das equações diferenciais algébricas,'' Grant Number 408687/2023-1). {Muniz is partially supported by CNPq, grant number 305599/2026-7.}
}
\begin{document}

\begin{abstract}
We prove that, under mild restrictions, the space of codimension-one foliations of degree one on a smooth projective complete intersection has two irreducible components of logarithmic type. We also prove that the same conclusion holds for any smooth hypersurface of dimension at least three that is not a quadric threefold. The proofs of these results follow essentially from a more general structure theorem for foliations on manifolds covered by lines. 
\end{abstract}

\maketitle

\setcounter{tocdepth}{1}

\tableofcontents

\section{Introduction}

A \emph{singular holomorphic foliation $\F$ of codimension one} on a {smooth} projective variety $X$ is determined by a $1$-form $\omega$ with values in a line bundle $N$, satisfying the Frobenius integrability condition $\omega \wedge d\omega=0$, and whose singular set has codimension at least two. In this case, $N$ is called the \emph{normal bundle} of $\F$. The set of all such foliations with fixed normal bundle $N$ defines a quasi-projective variety, denoted by $\Folx{X}{1}{N}$, known as the \emph{space of codimension-one foliations on $X$ with normal bundle $N$}{, see Section \ref{Sec-Fol}}.

The irreducible components of these spaces have been studied in various contexts. For example, when $X$ is a surface, the integrability condition is automatically satisfied, and the corresponding spaces are irreducible. When $\Pic(X) \simeq \Z$, a foliation $\F$ with normal bundle $N$ is said to have \emph{degree} $d$ if $N \simeq \OX(d+2)$, where $\OX(1)$ denotes the hyperplane bundle. {The notion of degree is also defined for foliations of arbitrary codimension; see Section \ref{Sec-Fol}. } 


{
In this case, we denote the space of degree-$d$ codimension-one foliations by $\Folx{X}{1}{d}$.  For $X = \p{n}$,  $n \geq 3$, the cases $d = 0, 1, 2$ are completely classified, and partial results exist for $d = 3$. It is not clear who established the degree-zero case, but a proof can be found in \cite{Deserti-Cerveau}. In \cite{JP-Jouanoulou79}, Jouanolou proved that a degree-one foliation $\F$, then called a Jacobi equation, falls into two cases: 
\begin{enumerate}
    \item $\F$ is a linear pullback of a degree-one foliation on $\p2$; or 
    \item $\F$ is given by a pencil of quadrics with a double plane.
\end{enumerate}
Then $\Folx{\pn}{1}{1}$ has two irreducible components. Moreover, the general elements in these components are defined by \emph{logarithmic forms} (see \S\ref{S:components}). Theorem \ref{T: Main components} below extends this result to complete intersections.

In \cite{LNC-96}, Cerveau and Lins Neto described $\Folx{\pn}{1}{2}$: it has six irreducible components; four of logarithmic type, one composed of linear pullback foliations, and the so-called exceptional component. In \cite{CLP-2022}, the irreducible components of $\Folx{\pn}{1}{3}$ were partially classified; there are at least 24 irreducible components. Beyond the case of projective spaces, irreducible components of spaces of foliations have also been studied on other classes of varieties, including Fano threefolds \cite{LPT13}, homogeneous spaces \cite{BFM,kuster2025}, hypersurfaces and complete intersections \cite{ACM-dist, Fig23}, varieties over fields of positive characteristic \cite{mendson2024codimension,MP25-positive-char}, and linear pullback components of spaces of higher-codimensional foliations on toric varieties \cite{AMV23-fol-on-toric,velazquez2022toric}.

In the case of smooth projective hypersurfaces $X$ in $\p{n+1}$, $n\ge 3$, if 
\[
\deg(X)>2\deg(\F)+1,
\]
then the restriction of foliations from the ambient $\p{n+1}$ defines an isomorphism between $\Folx{X}{1}{d}$ and $\Folx{\p{n+1}}{1}{d}$, see \cite[Theorem 3.6]{Fig23}. Therefore, the previous classifications for foliations on projective spaces are valid for these varieties. The problem is then to study low-degree hypersurfaces; for degree-one foliations, we need to analyze hypersurfaces of degree at most three.

In \cite{LPT13}, Loray, Pereira, and Touzet proved that for $X\subset \p4$ a smooth quadric hypersurface, $\Folx{X}{1}{1}$ has three irreducible components; only two are restrictions from foliations on $\p4$. In this work, we prove that, for hypersurfaces, this is the only case where $\Folx{X}{1}{1}$ has more than two irreducible components. More generally, we study the irreducible components of the space of degree-one {codimension-one} foliations on smooth projective complete intersections. Our main result is the following:

}


{
\begin{MainTheorem} \label{T: Main components}
Let $X$ be a smooth complete intersection in $\p{n+s}$ of type $(d_1,\ldots,d_s)$, such that either
\begin{enumerate}
    \item $X$ is a hypersurface, i.e. $s=1$, and $(n,\deg(X)) \neq (3,2)$; or
    \item $d_j \geq 3$ for all $j=1,\dots,s$, $n\geq 4$, and $c_1(TX)\geq 2$; or
    \item $d_j \geq 4$ for all $j=1,\dots,s$ and  $n\geq 3$.
\end{enumerate}
Then $\Folx{X}{1}{1}$ has exactly two irreducible components of logarithmic type. Moreover, every such foliation is the restriction of a foliation on $\p{n+s}$.
\end{MainTheorem}


To prove Theorem \ref{T: Main components}, we first establish Theorem \ref{T:Extdeg1general}, which improves \cite[Teorema 2.2.5]{TeseMateus}. Then we focus on varieties covered by lines. We study the deformations of lines along the leaves of a foliation, as introduced in \cite{LPT-def}. First, we show that a general free line is not invariant under any foliation (see Proposition \ref{P:line-notFinvar}). A detailed analysis of the \emph{tangential foliation} leads to the following structure theorem for foliations of small degrees on varieties covered by lines (see Theorem \ref{T:general-cov-by-lines}):

\begin{MainTheorem}
Let $X\subset \p{N}$ be a smooth projective variety of dimension $n\geq 3$ with $\Pic(X)=\Z$, covered by lines. If $\F$ is a codimension-one  holomorphic foliation on $X$ of degree $d$ such that either $d=1$, $X$ is a quadric and $d\leq 2$, or $X = \p{n}$ and $d\leq 3$. Then one of the following holds:
\begin{enumerate}
    \item $\F$ is defined by a closed rational 1-form without codimension-one zeros;
    \item $\F$ is algebraically integrable;
    \item $\F$ admits an algebraically integrable subfoliation of codimension two and degree $k$, with $k \leq d - 2$.
\end{enumerate}
\end{MainTheorem}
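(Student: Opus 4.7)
The plan is to construct the \emph{tangential foliation} $\mathcal{T}_\F\subseteq\F$ associated to the family of lines, in the spirit of \cite{LPT-def}, and then to run a trichotomy based on its codimension in $X$. The starting input is Proposition \ref{P:line-notFinvar}: a general free line $\ell\subset X$ is not $\F$-invariant, so the tangency scheme $\Tang(\F,\ell)$ is zero-dimensional on $\ell\cong\mathbb{P}^1$, of length $d$ computed from the restriction $N_\F|_\ell$. This gives, at each general point $p\in\ell$, a well-defined infinitesimal tangency condition between $\ell$ and $\F$.

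At a general point $p\in X$, the variety $\mathcal{H}_p$ of lines through $p$ has dimension $c_1(TX)-2$, and the condition that a line through $p$ be tangent to $\F$ at $p$ cuts out a hypersurface $\mathcal{H}_p^\F\subseteq\mathcal{H}_p$ of dimension $c_1(TX)-3$. Sweeping $X$ by these distinguished lines as $p$ varies produces an integrable subsheaf $\mathcal{T}_\F\subseteq T_\F$; by the rationally connected quotient machinery (\`a la Araujo--Druel--Kov\'acs), $\mathcal{T}_\F$ is a genuine subfoliation of $\F$ whose general leaves are covered by lines, hence algebraic, so $\mathcal{T}_\F$ is automatically algebraically integrable. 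The three conclusions of the theorem then correspond to the three possible codimensions $c$ of $\mathcal{T}_\F$ in $X$: if $c=1$ then $\mathcal{T}_\F=\F$ and case (2) follows since the leaves of $\F$ are rationally connected and algebraic; if $c=2$ then $\mathcal{T}_\F$ is exactly the codimension-two subfoliation of case (3), and its degree $k$ is obtained by comparing $c_1(N_\F)\cdot\ell=d+2$ with $c_1(N_{\mathcal{T}_\F})\cdot\ell$ restricted to a general line contained in a leaf of $\mathcal{T}_\F$, the rank-drop contribution of $2$ yielding the bound $k\leq d-2$; if $c\geq 3$ we must produce the closed rational $1$-form of case (1).

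The main technical obstacle is precisely the last case, $c\geq 3$: the abundance of lines tangent to $\F$ at each general point must be converted into global closedness of a defining $1$-form. The strategy here is to show that the Frobenius identity $\omega\wedge d\omega=0$ together with the large $(c_1(TX)-3)$-dimensional family of tangent lines forces $d\omega$ to be proportional to $\omega$ with a vanishing coefficient on a dense open subset; combined with the numerical hypothesis that either $d=1$ or $c_1(TX)\geq n+d-2$, which excludes intermediate pathologies and secures enough positive-dimensional tangency strata, this upgrades integrability to closedness. The Picard hypothesis $\Pic(X)=\Z$ then allows us to clear denominators and deliver a global closed rational $1$-form without codimension-one zeros. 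A secondary but nontrivial point is the degree bookkeeping in case (3), which demands a careful intersection-theoretic comparison on $X$ leveraging the positivity of $c_1(TX)$ to rule out edge cases where the subfoliation could coincide with $\F$ or have unexpectedly high degree.
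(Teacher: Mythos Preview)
Your approach diverges from the paper's in a way that leaves the hardest case unproved.

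First, a structural mismatch. The paper's \emph{tangential foliation} $\F_{\rm tang}$ lives on the moduli space $M\subset\Mor(\p1,X)$ of lines, with tangent space $H^0(\p1,f^*\TF)\subset H^0(\p1,f^*\TX)\cong T_fM$ at a free $f$; it is not a subfoliation of $\F$ on $X$. The paper's trichotomy is based on whether $\F_{\rm tang}$ has algebraic leaves on $M$, not on the codimension of a subobject of $\F$. When $\F_{\rm tang}$ is \emph{not} algebraically integrable, Theorem~C of \cite{LPT-def} is invoked as a black box to produce the closed rational $1$-form. When $\F_{\rm tang}$ \emph{is} algebraically integrable, Proposition~4.10 of \cite{LPT-def} yields an algebraically integrable $\G\subset\F$ on $X$ with $H^0(\p1,f^*\TG)=H^0(\p1,f^*\TF)$; a splitting-type computation (using the hypothesis to get $\sum a_i=c_1(TX)-(d+2)\geq -1$ with each $a_i\leq 1$) then forces $f^*\TG$ to be the nonnegative part of $f^*\TF$, which has exactly one negative summand $\op1(-c)$ with $c\in\{1,2\}$, giving $\codim\G=2$ and $\deg\G=d-1-c\leq d-2$.

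Your object $\mathcal{T}_\F\subset\TF$, swept by lines tangent to $\F$ at one point, is not the same as either $\F_{\rm tang}$ or $\G$. A line with $T_p\ell\subset T_{\F,p}$ need not lie in a leaf of $\F$, so it is unclear why the span of such lines should be involutive, and the appeal to ``rationally connected quotient machinery'' does not address this. More importantly, your case $c\geq 3$ is exactly where the closed $1$-form must be produced, and here you offer only a hope: that Frobenius together with an abundance of tangent lines forces $d\omega$ proportional to $\omega$ with vanishing factor. This is the substantive content of the theorem, and in the paper's route it is handled by the nontrivial Theorem~C of \cite{LPT-def}, whose proof passes through transversely affine structures and is not recoverable by a local manipulation of $\omega\wedge d\omega=0$. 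Note also that the numerical hypothesis $d=1$ or $c_1(TX)\geq n+d-2$ is used in the paper only to control the splitting type of $f^*\TF$ (hence the codimension and degree of $\G$); it plays no role in ``upgrading integrability to closedness'', so invoking it for that purpose in your outline does not help. Finally, your degree bound in case $c=2$ (``rank-drop contribution of $2$'') is not an argument; the actual computation compares $c_1(\TF)=r-c$ and $c_1(\TG)=r$ via adjunction.
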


Specializing to degree-one foliations on smooth complete intersections with $c_1(TX)\geq 2$, we conclude that either the foliation is defined by a closed rational 1-form without codimension-one zeros, or it admits a codimension-two subfoliation of degree zero (see Corollary \ref{C:deg one}). In the latter case, we prove that the foliation is the pullback of a degree-one foliation on the projective plane, and is therefore again defined by a closed rational 1-form without codimension-one zeros (see Theorem \ref{T:degreoneSCI}). The last item follows from Theorem \ref{T:Extdeg1general}. This completes the proof of the last two items of Theorem \ref{T: Main components} and of the first for smooth hypersurfaces either of degree $\deg(X) \leq n$ and dimension $n\geq 4$; or degree $\deg(X)\geq 4$ and dimension $n\geq 3$. The only missing case is $\deg(X) \leq 3$ and $n=3$. For $\deg(X) =1$ the result is immediate, and for $\deg(X) = 2$ the conclusion is false, as proven in \cite{LPT13}. The cubic case follows from Theorem \ref{T:ext-deg1-cub3fold}.

}

This article is organized as follows. In Section \ref{Sec-Fol}, we introduce the main definitions and background on holomorphic foliations, and describe the space of codimension-one foliations on projective varieties. Section \ref{Sec-ext} is devoted to the study of extensions and restrictions of foliations. In Section \ref{Sec-structure}, we develop the tangential foliation by analyzing deformations of rational curves along the leaves, and use this to prove structural results for codimension-one foliations of degrees one and two on complete intersections. In Section \ref{Sec-deg1comp}, we apply the previous results to complete the classification of degree-one foliations on complete intersections.

\section{Holomorphic foliations}\label{Sec-Fol}
Let $X$ be a smooth projective complex manifold. A \emph{distribution} $\F$ on $X$ is the data of a saturated subsheaf $\TF\subset \TX$, often called the \emph{tangent sheaf} of $\F$. The quotient $\nF = TX/\TF$ is a torsion-free sheaf called the \emph{normal sheaf}. The \emph{singular locus} $\sing(\F)$ is the locus where $\nF$ is not locally free; since $\nF$ is torsion free, $\codim \sing(\F) \geq 2$. {These sheaves fit in a short exact sequence:
\[
0 \lra \TF \lra \TX \lra \nF \lra 0.
\]}
The tangent sheaf $\TF$ is \emph{involutive} if it is closed under the Lie bracket of vector fields; in this case, $\F$ defines a \emph{foliation}, i.e., through every point $x \in X\setminus \sing(\F)$ passes an immersed analytic submanifold $\mathcal{L}$ of dimension $\rk \TF$ such that $T_x\mathcal{L} = T\F_x$; these are the \emph{leaves} of the foliation.

The map $\TX \to \nF$ is defined by contraction with a twisted $p$-form $\omega_\F \in H^0(X,\Omega^p_X\otimes \det(\nF))$, where $p = \rk \nF$. Conversely, given a line bundle $L\in \Pic(X)$,  $\omega \in H^0(X,\Omega^p_X\otimes L)$
defines a distribution if and only if it is \emph{locally decomposable off the singular set}, LDS for short, see \cite{Med}. This condition means that for every point $x\in X\setminus \sing(\omega)$ there exists an open neighborhood $U\ni x$ and $1$-forms $\alpha_1, \dots, \alpha_p$ such that 
\[
\omega|_U = \alpha_1 \wedge \cdots \wedge \alpha_p.
\]
Moreover, $\omega$ is integrable (i.e., defines a foliation) if it is LDS and satisfies, for every $j = 1, \dots, p$,
\[
d\alpha_j \wedge \alpha_1 \wedge \cdots \wedge \alpha_p=0.
\]
The local decomposability and integrability condition determine algebraic equations on $\p{}H^0(X,\Omega^p_X\otimes L)$ defining the space of foliations
\[
\Folx{X}{p}{L} \subset \p{}H^0(X,\Omega^p_X\otimes L).
\]

The main objective of our work is to describe the irreducible components of this space for $p=1$, $X\subset \p{N}$ a smooth complete intersection, and specified $L$. 
Any line bundle on $\pn$ is isomorphic to $\opn(k)$ for some $k\in \ZZ$. A distribution has \emph{degree} $d$ if it is defined by $\omega \in H^0(\omn{p}(d+p+1))$. This follows Lins Neto's convention of the degree of a foliation. In general, if $X$ is a smooth projective variety such that $\Pic(X)=\ZZ$, we define the \emph{degree of a foliation $\F$} determined by a section in $ H^0(X,\Omega_X^p(d+p+1))$ as $\deg(\F):=d$.
In this case, for simplicity, we will denote $\Folx{X}{1}{d}:=\Folx{X}{1}{\mathcal{O}_{X}(d+2)}$. We are particularly interested in smooth complete intersections.

Smooth complete intersections of dimension at least three in $\pn$ have Picard group isomorphic to $\ZZ$. This is due, essentially, to the Lefschetz Hyperplane Theorem and the following fact. 

\begin{lemma}\label{L:complete-intersection}
Let $X\subset \p{N}$ be a smooth complete intersection of codimension $k$. Then there exist hypersurfaces $D_1, \dots, D_k \subset \p{N}$ such that $X_i = D_1 \cap \dots \cap D_i$ is smooth for $i = 1, \dots, k$ and $X=X_k$.
\end{lemma}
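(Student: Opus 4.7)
The plan is to produce $D_1,\dots,D_{k-1}$ via a Bertini-type genericity argument and then construct $D_k$ using the Lefschetz hyperplane theorem. Fix $d \gg 0$ such that $\mathcal{I}_X(d)$ is globally generated and the restriction map $H^0(\mathcal{I}_X(d)) \to H^0(N^*_{X/\p{N}}(d))$ is surjective; both conditions are available because $X$ is smooth, so $N^*_{X/\p{N}}$ is locally free of rank $k$, and by Serre vanishing. I would then choose general hypersurfaces $D_1,\dots,D_{k-1} \in |\mathcal{I}_X(d)|$.

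Next, I would verify that each partial intersection $X_i := D_1 \cap \cdots \cap D_i$ is smooth of codimension $i$, for $i=1,\dots,k-1$. Away from $X$, which is the base locus of $|\mathcal{I}_X(d)|$ for $d$ large, this follows from iterating Bertini's theorem. Along $X$, smoothness of $X_i$ at a point $x$ amounts to linear independence of $dD_1|_x,\dots,dD_i|_x$ inside $N^*_{X/\p{N},x}$; since $N^*_{X/\p{N}}(d)$ is globally generated, a generic $(k-1)$-tuple of sections in $H^0(\mathcal{I}_X(d))$ satisfies this condition at every point of $X$ simultaneously. This is the main technical step, resolved by a standard incidence-variety dimension count.

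It remains to find $D_k$ with $D_k \cap Y = X$, where $Y := X_{k-1}$. Since $X$ is a smooth codimension-one subvariety of the smooth variety $Y$, it is a Cartier divisor, determining the line bundle $\mathcal{O}_Y(X)$. For $\dim X \geq 2$, so that $\dim Y \geq 3$, the Lefschetz hyperplane theorem yields $\Pic(Y) \simeq \mathbb{Z}\langle \mathcal{O}_Y(1)\rangle$, whence $\mathcal{O}_Y(X) \simeq \mathcal{O}_Y(m)$ for some positive integer $m$. Surjectivity of the restriction $H^0(\mathcal{O}_{\p{N}}(m)) \to H^0(\mathcal{O}_Y(m))$, which follows from the Koszul resolution of the complete intersection $Y$, then lifts the defining section of $X$ to a hypersurface $D_k \subset \p{N}$ of degree $m$ with $D_k \cap Y = X$, completing the chain.
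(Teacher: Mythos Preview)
Your Bertini step along $X$ does not go through. The incidence count you invoke shows that the locus in $X$ where $dD_1|_x,\dots,dD_{k-1}|_x$ become linearly dependent in the rank-$k$ fibre $N^*_{X/\p{N},x}$ has expected codimension
\[
\bigl(k-(k-2)\bigr)\bigl((k-1)-(k-2)\bigr)=2,
\]
and for $d\gg 0$ the class of this degeneracy locus is $c_2\!\bigl(N^*_{X/\p{N}}(d)\bigr)$, which is a nonzero effective class on $X$ whenever $\dim X\geq 2$. So a \emph{generic} $(k-1)$-tuple in $|\mathcal{I}_X(d)|$ produces an $X_{k-1}$ that is singular along a codimension-two locus of $X$. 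A concrete instance: take $X\subset\p4$ a smooth intersection of two quadrics ($k=2$); then $N^*_X(d)\cong\mathcal{O}_X(d-2)^{\oplus 2}$, and a general section vanishes at $4(d-2)^2>0$ points of $X$, so the general degree-$d$ hypersurface through $X$ is singular at exactly those points. Thus the very case $\dim X\geq 2$ you need for the Lefschetz step is the one where your genericity argument fails.

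The paper avoids this by \emph{not} passing to high degree. It keeps the original degrees $d_1\geq\cdots\geq d_k$ and replaces $f_i$ by $g_i=f_i+\sum_{j>i}h_{ij}f_j$ with generic $h_{ij}$ of degree $d_i-d_j$. The crucial gain is that for every $i$ one still has $X=X_i\cap V(f_{i+1},\dots,f_k)$; hence a singular point of $X_i$ lying on $X$ would force $X$ itself to be singular there. Bertini then handles smoothness away from the base locus, which is precisely $X$. In other words, the paper's argument along $X$ is algebraic (transfer of singularity via the remaining equations), not a dimension count, and this is exactly what your approach is missing.
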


\begin{proof}
Assume $X = V(f_1, \dots, f_k)$ with $\deg(f_1) \geq \deg(f_2) \geq \cdots \geq \deg(f_k)$; denote $d_i = \deg(f_i)$. For each $1\leq i < j \leq k$, let $h_{ij}$ be a polynomial of degree $d_i - d_j$, to be chosen generically. Then define 
\[
g_i = f_i + \sum_{j=i+1}^k h_{ij}f_j \quad \text{and} \quad X_i = V(g_1, \dots, g_i);
\]
clearly $X  = X_k$. We need to show that $X_i$ is smooth for a generic choice of $h_{ij}$. We proceed by induction. For the sake of the argument, define $X_0 = \pn$. Suppose $X_{i-1}$ is smooth, for some $i\geq 1$.

Note that $X_i = X_{i-1} \cap V(g_i)$ is an element of the linear system  $\Lambda $ spanned by $f_i$ and $f_jh_{ij}$, for $h_{ij}$ varying in a basis of $H^0(\OO_{X_{i-1}}(d_i-d_j))$. By Bertini's Theorem, $X_i$ can be chosen smooth away from the base locus, which is precisely $X$. Now, if $X_i$ were singular at some point $x\in X$, then $X = X_i \cap V(f_{i+1}, \dots, f_k)$ would also be singular at $x$, which is absurd. 
\end{proof}

\begin{remark}\label{R:strong}
In {\cite{ACM-dist,TeseMateus}} the authors study distributions and foliations on smooth complete intersections satisfying an extra hypothesis: $X=V(f_1, \dots, f_k)$ with $\deg(f_1) \leq \dots \leq \deg(f_k)$ such that $X_i = V(f_1, \dots, f_i)$ is smooth for every $i$. There are smooth complete intersections without this property; for instance, $X = V(f_1,f_2)$ where $\deg(f_1) < \deg(f_2)$ and $V(f_1)$ is singular. However, one can verify that the arguments in these papers depend uniquely on the existence of a chain of smooth complete intersections $X_1 \supset \cdots \supset X_k = X$, regardless of the degrees. Therefore, in light of Lemma \ref{L:complete-intersection}, their results apply to smooth complete intersections in general. 
\end{remark}

\begin{remark}\label{deg-posit}
Note further that a smooth complete intersection only admits distributions and foliations of nonnegative degree. Indeed, $H^0(X,\Omega_X^p(d+p+1)) = 0$ for $d<0$ and $X$ a smooth complete intersection, see \cite[Lemma 5.17]{ACM-dist}.
\end{remark}

\subsection{Algebraically integrable foliations} 
A foliation on $X$ is \emph{algebraically integrable} if the leaf through a general point is an algebraic subvariety.
In this case, there exists a dominant rational map $\pi \colon X \dashrightarrow Y$ onto a projective variety $Y$ such that the closure of a leaf is an irreducible component of a fiber of $\pi$, see \cite[\S3]{AD-fano}.

\begin{proposition}\label{P:rat-int-or-sub}
Let $X$ be a complex projective manifold of dimension greater than $2$ and with $\Pic(X) \cong \ZZ$. Let $\F$ be a codimension-one foliation on $X$. Suppose that $\F$ is algebraically integrable.  Then either:
\begin{enumerate}
    \item     $\F$ admits a rational first integral with only irreducible fibers; or
    \item     there is a codimension-two subfoliation $\G\subset \F$ such that $c_1(\nF) = c_1(\nG) + \Delta$, with $\Delta \geq 0$.
\end{enumerate}
\end{proposition}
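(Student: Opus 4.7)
The plan is to use the algebraic integrability of $\F$ to produce a fibration onto a curve and then analyze its fibers using the hypothesis $\Pic(X)\cong\Z$.

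First I would translate algebraic integrability into a concrete morphism: since $\F$ has codimension one, its closed leaves form a one-dimensional family, and after resolving indeterminacies, normalizing, and applying Stein factorization I obtain a dominant morphism $\pi\colon \widetilde{X} \to Y$ from a modification $\widetilde{X}$ of $X$ to a smooth projective curve $Y$, with general fibers connected and irreducible components equal to leaf closures of $\F$. The dichotomy then reflects whether every fiber of $\pi$ is irreducible: if so, $\pi$ descends to a rational first integral on $X$ as in case~(1); otherwise, some fiber decomposes and we must construct a codimension-two subfoliation.

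In the latter case, write a reducible fiber as $\pi^{-1}(c_0)=\sum_i m_iF_i$ with $k\geq 2$ components, and let $H$ be the ample generator of $\Pic(X)$, so that $F_i\sim a_iH$ (pushed down to $X$) and the full fiber class is $cH$ with $c=\sum m_ia_i>a_1$. The component $F_1$ (possibly after replacing $a_1$ by a suitable multiple) lies in a positive-dimensional linear system $|a_1H|$, and the associated rational map $\rho\colon X\dashrightarrow \p{M}$ is independent of $\pi$: fibers of $\rho$ have class $a_1H$, strictly smaller than $cH$, so $\rho$ cannot factor through~$\pi$. The product $\Phi=(\pi,\rho)\colon X\dashrightarrow B\subset Y\times \p{M}$ therefore has two-dimensional image~$B$, and the foliation $\G$ defined by $\Phi$ is a codimension-two subfoliation of $\F$: every fiber of $\Phi$ lies inside the fiber of $\pi$ through the same point, so $\TG\subset \TF$ as saturated subsheaves of $\TX$.

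For the Chern class relation, the saturated inclusions $\TG\subset\TF\subset\TX$ yield, after passing to reflexive hulls, a short exact sequence
\[
0\to \TF/\TG \to \nG \to \nF \to 0,
\]
whence $c_1(\nF)=c_1(\nG)+\Delta$ with $\Delta=-c_1(\TF/\TG)$. To verify $\Delta\geq 0$, I would identify $\TF/\TG$ birationally with the pullback of the relative tangent sheaf $T_{B/Y}$ of the induced surface-to-curve fibration $B\to Y$ and argue that, on a suitable minimal model, the relative canonical class $K_{B/Y}$ is pseudo-effective, so its pullback produces an effective class~$\Delta$. The main obstacle is making this positivity argument rigorous and, more subtly, ensuring that the pencil produced by $\rho$ is genuinely independent of $\pi$ in degenerate configurations; both issues are controlled by the Picard rank one hypothesis, which forces every divisor class on $X$ to be an integer multiple of~$H$.
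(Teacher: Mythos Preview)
Your construction of a codimension-two subfoliation via an auxiliary pencil $\rho$ in $|a_1H|$ (which I read as a pencil, not the full linear system, since otherwise $\rho$ could be an embedding and $\Phi$ would have zero-dimensional fibers) does produce some $\G\subset\F$, but it cannot yield the inequality $\Delta\geq 0$. The surface $B=\operatorname{im}(\pi,\rho)$ fibers over $Y\cong\p1$ with general fiber the image under $\rho$ of a leaf of $\F$; since $\rho$ is a pencil, this image is $\p1$. Hence $B\to Y$ is a ruled surface, and $K_{B/Y}$ is \emph{not} pseudo-effective --- it is anti-ample on the fibers. Concretely, if $\omega_\F$ defines $\F$ and $\omega_\rho=h_2\,dh_1-h_1\,dh_2$ with $h_i\in H^0(\OX(a_1))$ generic, then $\omega_\F\wedge\omega_\rho\in H^0(\Omega^2_X\otimes \nF(2a_1))$ has no codimension-one zeros, so $c_1(\nG)=c_1(\nF)+2a_1$ and $\Delta=-2a_1<0$. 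Your $\G$ is the wrong subfoliation.

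The argument in \cite[Lemma 3.1, Theorem 3.2]{LPT13} instead exploits the reducible fiber directly. Write the fiber over $0$ as $\sum m_iF_i$ with $k\geq 2$ components; then $\omega_\F$ is, up to clearing denominators, the closed logarithmic form $\sum_i m_i\,df_i/f_i - dg/g$. Because $k\geq 2$, the residue condition $\sum\lambda_i\deg(f_i)=0$ admits a nontrivial solution, giving a second closed logarithmic form $\eta=\sum\lambda_i\,df_i/f_i$ not proportional to $\omega_\F$, with polar divisor \emph{contained in} that of $\omega_\F$. Then $\G$ is defined by $\omega_\F\wedge\eta$. The crucial point is that along each $F_i$ both forms have a simple pole with proportional residues, so $\omega_\F\wedge\eta$ acquires a zero there: as a section of $\Omega^2_X\otimes\nF\otimes\OX(\sum F_i)$ it vanishes along $\sum F_i$, hence descends to $\Omega^2_X\otimes\nF$. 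This forces $c_1(\nG)\leq c_1(\nF)$. The positivity of $\Delta$ is thus a cancellation phenomenon coming from shared poles, not a semipositivity statement about a relative canonical bundle; your pencil $\rho$, being transverse to the leaves of $\F$, produces no such cancellation.
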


\begin{proof}
This follows directly from the arguments in \cite[Lemma 3.1 and Theorem 3.2]{LPT13}.
\end{proof}

\begin{corollary}\label{C:rat-int-sci}
Let $X\subset \p{N}$ be a smooth complete intersection of dimension $n\geq3$ and $\F$ be a codimension-one holomorphic foliation on $X$ of degree $d$. 
If $\F$ is algebraically integrable, then
    \begin{enumerate}
         \item $\F$ is defined by a rational $1$-form without zeros in codimension-one; or 
    \item $\F$ admits a codimension-two subfoliation $\G$ of degree $k$ such that $k \leq d-1$.        
    \end{enumerate}   
\end{corollary}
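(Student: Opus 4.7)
The plan is to apply Proposition \ref{P:rat-int-or-sub} directly to $\F$ and translate each of its two alternatives into the conclusions of the corollary. Since $X$ is a smooth complete intersection of dimension $n \geq 3$, the Lefschetz Hyperplane Theorem yields $\Pic(X) \cong \Z$, generated by $\OO_X(1)$, so the hypotheses of that proposition are met and one can work with integer degrees throughout.

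In the first alternative of the proposition, $\F$ admits a rational first integral $f \colon X \dashrightarrow C$ whose fibers are all irreducible. The plan is to exhibit $\F$ as defined by the rational $1$-form obtained from $df$ (after clearing denominators and twisting into $\Omega_X^1 \otimes \nF$), and to argue that the irreducibility of the fibers of $f$ prevents any prime divisor from appearing as a zero of this form. This yields conclusion $(1)$.

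In the second alternative, there is a codimension-two subfoliation $\G \subset \F$ satisfying $c_1(\nF) = c_1(\nG) + \Delta$ with $\Delta \geq 0$. Writing everything as multiples of the hyperplane class $H$, the degree conventions of the paper give $c_1(\nF) = (d+2)H$ for the codimension-one foliation $\F$ of degree $d$ and $c_1(\nG) = (k+3)H$ for the codimension-two foliation $\G$ of degree $k$. The relation then reads $\Delta = (d-k-1)H \geq 0$, which forces $k \leq d-1$, giving conclusion $(2)$.

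The main obstacle is the translation in the first alternative: one must certify that the rational $1$-form extracted from the first integral has no zeros in codimension one. This rests on interpreting the phrase \emph{irreducible fibers} of Proposition \ref{P:rat-int-or-sub} as excluding the appearance of prime divisors with non-trivial multiplicities in the defining divisors of $f$; modulo this, everything else is routine intersection-theoretic bookkeeping in $\Pic(X) \cong \Z$.
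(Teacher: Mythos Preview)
Your handling of the second alternative is correct and matches the paper: writing $c_1(\nF)=(d+2)H$ and $c_1(\nG)=(k+3)H$, the inequality $\Delta\geq 0$ gives $k\leq d-1$.

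The gap is in the first alternative. The phrase ``only irreducible fibers'' in Proposition~\ref{P:rat-int-or-sub} does \emph{not} mean the fibers are reduced: it allows $\rho^{-1}(c)=mD$ with $D$ prime and $m>1$. Such multiple fibers are precisely where the naive form you propose (essentially $df$, or $g\,df-f\,dg$ after clearing denominators) acquires a codimension-one zero along $D$. So you cannot simply reinterpret ``irreducible'' as ``reduced and irreducible''; that interpretation is false in general, and your argument as stated would fail whenever $\rho$ has a multiple fiber.

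The paper closes this gap with two extra ingredients you are missing. First, by the Lefschetz Hyperplane Theorem the smooth complete intersection $X$ (of dimension $\geq 3$) is simply connected. Second, simple connectedness together with \cite[Theorem~3.3]{LPT13} forces $\rho\colon X\dashrightarrow\p1$ to have at most two multiple fibers, which one may place over $0$ and $\infty$ and write as $\rho^{-1}(0)=pH_0$ and $\rho^{-1}(\infty)=qH_\infty$. Then instead of $df$ one uses the logarithmic pullback
\[
\rho^*\!\left(\frac{dx}{x}-\frac{dy}{y}\right),
\]
which defines $\F$, has polar divisor $H_0+H_\infty$, and has empty zero divisor. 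This is the rational $1$-form without codimension-one zeros claimed in conclusion~(1). Without the bound on the number of multiple fibers (and the switch to a logarithmic form), there is no reason for any form built from $\rho$ to avoid codimension-one zeros.
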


\begin{proof}
By Proposition \ref{P:rat-int-or-sub}, either $\F$ admits a rational first integral $\rho\colon X \dashrightarrow \p1$ with only irreducible fibers or admits a subfoliation $\G$ {such that $c_1(\nG) \leq c_1(\nF)$; hence $\deg(\G) \leq d-1$.} Suppose we are in the first case. By Lefschetz Hyperplane Theorem, $X$ is simply connected. Then the argument follows the proof of \cite[Proposition 3.5]{LPT13}. By \cite[Theorem 3.3]{LPT13}, $\rho$ admits at most $2$ multiple fibers, which we may assume to be over $0$ and $\infty$, and write $\rho^{-1}(0) = pH_0$ and $\rho^{-1}(\infty) = qH_\infty$. Then $\rho^*(\frac{dx}{x} - \frac{dy}{y})$ is a logarithmic $1$-form defining $\F$ with polar divisor $H_0+H_\infty$ and empty zero divisor. 
\end{proof}

Notice that the proof of Corollary \ref{C:rat-int-sci} holds for any simply connected manifold $X$ with Picard number $1$. For our purposes, we will only need $X$ to be a smooth complete intersection, especially hypersurfaces.

\subsection{Irreducible components of the space of foliations}\label{S:components}
A degree-zero codimension-one foliation on $\pn$ is determined by a twisted $1$-form $\omega \in H^0(\pn,\omn{1}(2))$. Any of them can be written as
\[
\omega=L_1dL_2-L_2dL_1,
\]
where $L_1$ and $L_2$ are degree one homogeneous polynomials, and thus they are all conjugate to each other (see \cite[Proposition 3.1]{Deserti-Cerveau}). Therefore, $\Folx{\pn}{1}{0}$ is isomorphic to the Grassmannian of lines in $\pn$.

For codimension-one foliations of degree one, Jouanoulou in \cite{JP-Jouanoulou79} proved that  $\Folx{\pn}{1}{1}$ has two irreducible components. Both components parameterize logarithmic foliations, which we now describe in more generality.

Let $X$ be any smooth projective complex variety such that $\dim(X)\geq 3$, $\Pic(X)=\ZZ$ and $H^1(X,\CC)=0$. Given $d_1,d_2,\dots, d_k$ positive integers and $\lambda_1,\lambda_2,\dots,\lambda_k$ complex numbers, not all zero, such that $\sum \lambda_i\cdot d_i=0$, we can take sections $f_i\in H^0(X, \OX(d_i))$ to form global sections
\begin{equation}\label{log-forms}
\omega = \left(\prod^k_{i=1}f_i\right)\sum^k_{i=1}\lambda_i\dfrac{df_i}{f_i} \in H^0(X, \Omega^1_{X}(\sum d_i)).
\end{equation}
Then $\omega$ is integrable and, if its singular locus has codimension greater than one, it determines a foliation known as a \emph{logarithmic foliation}. Varying $\lambda_i$ and $f_i$, $i = 1, \dots, k$, the forms $\omega$ sweep an irreducible algebraic variety subvariety of $\Folx{X}{1}{d_1 + \dots + d_k-2}$ whose closure is denoted by $\Log(X;d_1,d_2,\dots,d_k)$. In \cite{Calvo94},  it was established that $\Log(X;d_1,d_2,\dots,d_k)$ is an irreducible component of $\Folx{X}{1}{\sum d_i-2}$.    {These components are called } \emph{logarithmic components}.  {In the special case $k=2$, the foliation defined by the 1-form 
\[
\omega= \lambda_1f_2df_1+\lambda_2f_1df_2, 
\]
is algebraically integrable, and the corresponding irreducible components are also called \emph{rational components}}. These varieties can also be defined by the closure of the {image of } rational map
\[
\begin{array}{crcl}
\Phi_X\colon &\Sigma\times \displaystyle\prod^k_{i=1}\p{} H^0(X,\OX(d_i))&\dashrightarrow & \Pj H^0(X,\Omega^1_X(\sum d_i))\\
&((\lambda_1:\dots :\lambda_k),f_1,\dots ,f_k)&\mapsto & \left(\displaystyle\prod^k_{i=1}f_i\right)\left(\displaystyle\sum^k_{i=1}\lambda_i\dfrac{df_i}{f_i}\right)\end{array},
\]
where $\Sigma\subset\p{k-1}$ is the hyperplane defined by $\sum\lambda_i\cdot d_i=0$, if its domain is not empty.

{The irreducible components of $\Folx{\pn}{1}{1}$ are the rational component $\Log(\pn;2,1)$ and the logarithmic component $\Log(\pn;1,1,1)$.} We can also describe $\Log(\pn;1,1,1)$ as linear pullbacks of degree-one foliations on $\p2$. In general, {  let  $\LPB(d)$ denote the closure of}  the image of the map
\[
\begin{array}{rcl}
 \Theta\colon \mathbb{L}_2\times \Folx{\p{2}}{1}{d}&\longrightarrow  & \Folx{\pn}{1}{d}  \\
    (\pi,\G) &\longmapsto &\pi^*\G 
\end{array},
\]
where $\mathbb{L}_2$ is the set of linear projections of $\p{n}$ to $\p{2}$. Then, in \cite{CL-NE}, it was demonstrated that $\LPB(d)$ is an irreducible component of $\Folx{\pn}{1}{d}$, and it is called the \emph{linear pullback component}. It is an elementary exercise to show that $\LPB(1)=\Log(\pn;1,1,1)$. 

In degree two, Cerveau and Lins Neto proved in \cite{LNC-96} that $\Folx{\pn}{1}{2}$ has six irreducible components: a linear pullback component $\LPB(2)$; the rational components $\Log(\pn;3,1)$ and  $\Log(\pn;2,2)$; the logarithmic components $\Log(\pn;2,1,1)$ and $\Log(\pn;1,1,1,1)$; and another irreducible component called the \emph{exceptional component}, which is determined by the closure of the map
\[
\begin{array}{rcl}
 \Psi\colon \mathbb{L}_3&\rightarrow  & \Folx{\p{n}}{1}{2}  \\
    \pi &\mapsto &\pi^*\mathcal{E}
\end{array},
\]
where { $\mathbb{L}_3$ is the set of linear projections of $\p{n}$ to $\p{3}$  and} $\mathcal{E}$ is the \emph{exceptional foliation} determined by the section
\[
\beta = \frac{1}{x_0}(2fdg-3gdf) \in H^0(\p{3},\Omega^1_{\p{3}}(4)),
\]
for $f= 2x_0x_2-x_1^2$ and $g= 3x_0^2x_3 - 3x_0x_1x_2 + x_1^3$.

In \cite{CLP-2022}, the irreducible components of $\Folx{\pn}{1}{3}$ were partially classified; there are at least 24 irreducible components. Besides the projective space, there is some literature on irreducible components of the space of foliations on, for instance, Fano three-folds \cite{LPT13}, homogeneous spaces \cite{BFM,kuster2025}, and hypersurfaces \cite{ACM-dist,Fig23}.

In the case of smooth projective hypersurfaces $X$ in $\p{n+1}$, $n>3$, if 
\[
\deg(X)>2\deg(\F)+1,
\]
then the restriction of foliations from the ambient $\p{n+1}$ defines an isomorphism between $\Folx{X}{1}{d}$ and $\Folx{\p{n+1}}{1}{d}$, see \cite[Theorem 3.6]{Fig23}. Therefore, the previous classifications for foliations on projective spaces are valid for these varieties. The problem is then to study low-degree hypersurfaces; for degree-one foliations, we need to analyze hypersurfaces of degree at most three.

In \cite{LPT13}, the authors proved that for $X\subset \p4$ a smooth quadric hypersurface, $\Folx{X}{1}{1}$ has three irreducible components; only two are restrictions from foliations on $\p4$. In this work, we prove that, for hypersurfaces, this is the only case where $\Folx{X}{1}{1}$ has more than two irreducible components. 

\subsection{Degree-zero distributions on the projective space}
In \cite{ACM-dist}, the authors classify distributions of codimension one and degree zero on $\pn$. There is a stratification according to the \emph{class} of the $1$-form defining the distribution. The argument boils down to the classification of skew-symmetric matrices and the nesting of secant varieties to the Grassmannian of lines in $\p{}(\extp^2\CC^{n+1})$. In higher codimension, the situation is surprisingly simpler.

\begin{proposition}\label{P:deg0-integrable}
Let $\D$ be a degree-zero distribution of codimension $q \geq 2$ on $\pn$. Then $\D$ is integrable.
\end{proposition}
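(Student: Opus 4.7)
The plan is to identify the $q$-form defining $\D$ with a constant $(q+1)$-form on $V=\CC^{n+1}$ and then use the LDS condition to force that $(q+1)$-form to be decomposable; this will exhibit $\D$ as the pullback of a foliation from $\p{q}$.

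By the Euler/Koszul sequence, there is a canonical isomorphism $H^0(\pn,\Omega^q_{\pn}(q+1))\cong \extp^{q+1}V^*$ given by $\Omega\mapsto i_R\Omega$, where $R=\sum x_i\,\partial/\partial x_i$ is the Euler vector field. So I write $\omega=i_R\Omega$ for the form defining $\D$. The fiber $\omega|_{[x]}$ is represented by $i_x\Omega\in\extp^qV^*$, which descends to a $q$-form on $T_{[x]}\pn\cong V/\langle x\rangle$ since $i_xi_x\Omega=0$. Hence the LDS condition on $\omega$ amounts to requiring that $i_x\Omega$ be a decomposable $q$-form for all $x$ in a Zariski open subset of $V$; equivalently, the image of the linear map $\varphi\colon V\to\extp^qV^*$, $x\mapsto i_x\Omega$, projectivizes into the Grassmannian $\Gr{q}{V^*}\subset\mathbb{P}(\extp^qV^*)$ embedded via Plücker.

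The central step, which I expect to be the main obstacle, is to deduce from this that $\Omega$ itself is decomposable, crucially using $q\geq 2$. Any linear subvariety of $\Gr{q}{V^*}$ is contained in a maximal one, which is either of $\alpha$-type $\{U:L\subset U\}$ for a fixed $(q-1)$-plane $L\subset V^*$, or of $\beta$-type $\mathbb{P}(\extp^qW)$ for a fixed $(q+1)$-plane $W\subset V^*$. A direct coefficient calculation in a basis adapted to $L$ or $W$ shows that the $\alpha$-case forces $\Omega=0$ (and it is precisely here that the hypothesis $q\geq 2$ enters, since for $q=1$ the $\alpha$-type subvariety is all of $\Gr{1}{V^*}=\mathbb{P}(V^*)$ and imposes no condition), while the $\beta$-case forces $\Omega\in \extp^{q+1}W$, which is one-dimensional. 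Therefore $\Omega=\beta_0\wedge\cdots\wedge\beta_q$ for some basis $\beta_0,\dots,\beta_q$ of $W$. The failure of the implication for $q=1$ is consistent with the fact that degree-zero codimension-one distributions on $\pn$ need not be integrable, as classified in \cite{ACM-dist}.

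Finally, once $\Omega$ is decomposable, a direct expansion yields
\[
\omega\big|_{\{\beta_0\neq 0\}}=\beta_0^{q+1}\,d\gamma_1\wedge\cdots\wedge d\gamma_q,\qquad \gamma_j:=\beta_j/\beta_0,
\]
so $\D$ is the pullback under the linear rational map $[\beta_0:\cdots:\beta_q]\colon\pn\dashrightarrow\p{q}$ of the (trivially integrable) zero-dimensional foliation on $\p{q}$. Its leaves are the $(n-q)$-dimensional linear subspaces of $\pn$ that are the fibers of this map, so $\D$ is algebraically integrable, and in particular integrable.
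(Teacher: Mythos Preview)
Your proof is correct and reaches the same endpoint as the paper --- the distribution is the pullback of the point foliation on $\p{q}$ via a linear projection --- but by a different route. The paper quotes de Medeiros's structure theorem \cite[Theorem~A]{Med} for LDS $q$-forms with linear coefficients, which yields two normal forms, $\omega=\alpha\wedge dx_0\wedge\cdots\wedge dx_{q-2}$ or $\omega=i_v(dx_0\wedge\cdots\wedge dx_q)$, and then checks that the Euler condition $i_\rad\omega=0$ kills the first case and forces $v=\rad$ (modulo irrelevant terms) in the second. You instead build in the Euler condition from the outset via the isomorphism $H^0(\pn,\Omega^q_\pn(q+1))\cong\extp^{q+1}V^*$, translate LDS into the statement that the projectivized image of $x\mapsto i_x\Omega$ lies in the Pl\"ucker-embedded Grassmannian, and invoke the classical dichotomy of maximal linear subspaces of $\Gr{q}{V^*}$ into $\alpha$- and $\beta$-families. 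The match is exact: Medeiros's two normal forms correspond precisely to your $\alpha$- and $\beta$-types. What your approach buys is self-containment (no black-box citation of \cite{Med}) and a transparent explanation of why $q\geq 2$ is essential --- for $q=1$ the $\alpha$-family degenerates to all of $\mathbb P(V^*)$ and imposes no constraint --- while the paper's approach is shorter once the cited theorem is granted.
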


\begin{proof}
Let $\omega \in H^0(\p{n},\omn{q}(q+1))$  be a $q$-form defining $\D$. By \cite[Theorem A]{Med}, there exists a choice of homogeneous coordinates such that either $\omega = \alpha\wedge dx_0\wedge \cdots \wedge dx_{q-2}$ for some 1-form $\alpha$ with linear coefficients, or $\omega = i_v dx_0\wedge \cdots \wedge dx_{q}$, for some linear vector field $v$. Imposing $i_\rad \omega = 0$, for $\rad = \sum_j x_j \del{ x_j}$ the radial vector field, we must have $\omega = i_\rad dx_0\wedge \cdots \wedge dx_{q}$, up to a scalar multiple. Therefore, $\D$ is integrable. 
\end{proof}

\section{Extensions and restrictions of foliations on complete intersections}\label{Sec-ext}
Let $U_k$ be the open subset of twisted $q$-forms in $ H^0(\p{N},\om{q}{N}(k))$ whose singular set has codimension greater than or equal to two, and let  $X\subset \p{N}$ be a smooth complete intersection.

\begin{definition}\label{D:trans}
We say that $\omega\in U_{k}$ is transversal in codimension one to $X$ if $i^*\omega$ has a singular set of codimension greater than or equal to two, where $i\colon X\rightarrow \p{N}$ is the natural inclusion. A singular holomorphic foliation $\F$  on $\p{N}$ of codimension $q$ is \emph{transversal in codimension one} to $X$, if a twisted $q$-form that determines it is transversal in codimension one to $X$.
\end{definition}

If a foliation $\F$ on $\p{N}$ is transversal in codimension one to $X$, the restriction to $X$ of its twisted $q$-form naturally satisfies the decomposability and integrability hypotheses and, therefore, defines a foliation $\G$ on $X$.

\begin{definition}
The foliation $\G$ will be called the \emph{restriction} of $\F$ to $X$. We will denote $\restr{\F}{X}:=\G$.
If a foliation $\G$ on a smooth complete intersection $X\subset \p{N}$ is the restriction of a foliation $\F$ in $\p{N}$ to $X$, we say that $\F$ is an \emph{extension} for $\G$.
\end{definition}

\subsection{Codimension-one foliations}
Let $X\subset \p{N}$ be a smooth complete intersection of dimension $r\geq 3$. The restriction maps
\[
\rest_0 \colon H^0(\p{N},\mathcal{O}_{\p{N}}(k))\rightarrow H^0(X,\OX(k))
\]
are surjective, for all $k\in \mathbb{Z}$. 
Thus, if $\F$ is an algebraically integrable foliation on $X$ defined by a pencil of hypersurfaces sections  $f, g $, where $f\in H^0(X,\OX(\deg(f))$ and $g\in H^0(X,\OX(\deg(g))$. Then, the 1-form $\deg(g)gdf-\deg(f)fdg$ determines $\F$. Therefore, we can find coprime polynomials $F,G$ such that $\rest_0(F)=f$ and $\rest_0(G)=g$, and the foliation restriction generated by $\deg(g)GdF-\deg(f)FdG$ is {an} extension of $\F$.

In general, for logarithmic 1-forms as in \eqref{log-forms}, we have the commutative diagram 
\begin{equation}\label{E:logmap}
\xymatrix{
\Sigma\times \left(\prod^k_{i=1}\Pj H^0(\p{N},\mathcal{O}_{\p{N}}(d_i))\right)\ar@{-->}[r]^{\,\,\,\,\,\,\,\,\,\,\,\,\,\,\,\,\Phi_{\p{N}}} \ar@{-->}[d]_{\text{id}\times \rest_0} & \Pj H^0(\p{N},\om{1}{N}(\sum d_i)) \ar@{-->}[d]^{i^*} \\
\Sigma\times \left(\prod^k_{i=1}\Pj H^0(X,\OX(d_i))\right) \ar@{-->}[r]_{\,\,\,\,\,\,\,\,\,\,\,\,\,\,\,\,\,\,\,\,\Phi_{X}} & \Pj H^0(X,\omx{1}(\sum d_i)) }
\end{equation}
which projects all logarithmic 1-forms of $\p{N}$ onto $X$ surjectively, where $i \colon X\rightarrow \p{N}$ is the natural inclusion map. Therefore, all foliations generated by logarithmic 1-forms extend.

\begin{proposition}\label{P:logext}
    Let $X$ be a smooth complete intersection. Then, every codimension-one foliation on any logarithmic component $\Log(X,d_1,d_2,\dots,d_k)$ extends.
\end{proposition}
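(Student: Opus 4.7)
The plan is to exploit the commutative diagram \eqref{E:logmap} together with the surjectivity, for every $d \geq 0$, of the scalar restriction maps
\[
\rest_0 \colon H^0(\p{n}, \opn(d)) \to H^0(X, \OX(d)).
\]
This surjectivity holds because smooth complete intersections are projectively normal; equivalently, it follows inductively from Lemma \ref{L:complete-intersection} and the vanishing of $H^1$ of line bundles on $\p{n}$ and on successive smooth hypersurface sections in the chain.

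First I would handle the foliations in the image of $\Phi_X$. Such a foliation $\F$ is defined by
\[
\omega = \Bigl(\prod_{i=1}^{k} f_i\Bigr)\sum_{i=1}^{k}\lambda_i\,\frac{df_i}{f_i}, \quad f_i\in H^0(X,\OX(d_i)).
\]
Using $\rest_0$, lift each $f_i$ to some $F_i \in H^0(\p{n},\opn(d_i))$ and form
\[
\Omega = \Bigl(\prod F_i\Bigr)\sum \lambda_i\,\frac{dF_i}{F_i} \in H^0\bigl(\p{n},\om{1}{n}(\textstyle\sum d_i)\bigr).
\]
Because pullback commutes with exterior derivative ($i^*(dF_i)=df_i$), the identity $i^*\Omega = \omega$ is immediate. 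For a generic choice of lifts the $F_i$ are pairwise coprime, so $\Omega$ is nonzero with singular locus of codimension at least two and defines a logarithmic foliation on $\p{n}$ extending $\F$.

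To cover the full closure $\Log(X;d_1,\ldots,d_k)$, I would fix a linear section $\sigma$ of the surjection $\mathrm{id}\times \rest_0$ (which exists by linear algebra) and consider the composition $\Phi_{\p{n}}\circ \sigma$, an algebraic morphism from the parameter space of $X$ into $\Log(\p{n};d_1,\ldots,d_k)$. Commutativity of \eqref{E:logmap} yields $i^* \circ (\Phi_{\p{n}}\circ \sigma) = \Phi_X$ on the open locus where all maps are defined. Since $\Log(\p{n};d_1,\ldots,d_k)$ is a closed subvariety of a projective space, the image of the resolved morphism is closed in $\Log(\p{n};d_1,\ldots,d_k)$; by the density of $\mathrm{image}(\Phi_X)$ in $\Log(X;d_1,\ldots,d_k)$ and continuity of $i^*$, this image maps under $i^*$ onto the entire target, proving the proposition.

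The main delicate point is the closure step: one must rule out that the algebraic lift becomes trivial or acquires codimension-one zeros in the limit, which would prevent the restriction from matching the prescribed boundary foliation. The argument is that the locus of tuples producing valid logarithmic forms with singular set of codimension at least two is open and nonempty in the parameter space, so by irreducibility the extension behaves well along the closure, and the identity $i^*\circ(\Phi_{\p{n}}\circ \sigma)=\Phi_X$ persists at every limit point. This guarantees that every foliation in $\Log(X;d_1,\ldots,d_k)$ is realized as the restriction of a foliation on $\p{n}$.
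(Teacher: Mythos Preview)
Your overall strategy matches the paper's: lift genuine logarithmic forms via the commutative diagram \eqref{E:logmap} and the surjectivity of $\rest_0$, then handle the closure. The difference is in the closure step, and your treatment there has a gap.

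In your final paragraph, the observation that the locus of tuples producing forms with singular set of codimension $\geq 2$ is open and nonempty, together with irreducibility of the parameter space, yields only \emph{density} of the good locus; it gives no information about behaviour at the boundary. The assertion that ``$i^*\circ(\Phi_{\p{n}}\circ\sigma)=\Phi_X$ persists at every limit point'' is therefore unjustified: after resolving the rational map, a limiting form $[\omega]\in\Log(\p{n};d_1,\dots,d_k)$ might a priori lie in the indeterminacy locus of the linear projection $i^*$, or might acquire a divisorial zero. Openness and irreducibility cannot rule either of these out.

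The paper handles this step directly with a sequential argument. Given $\F$ in the closure, choose a sequence $\F_i\to\F$ of genuine logarithmic foliations on $X$, lift each to a logarithmic foliation $\G_i$ on $\p{n}$ defined by $\omega_i$, and let $[\omega]$ be any adherent point of the sequence $([\omega_i])$ in the compact projective space $\Pj H^0(\p{n},\Omega^1_{\p{n}}(\sum d_i))$. Integrability is a closed condition, so $\omega$ is integrable; since $i^*\omega_i$ defines $\F_i$ and $\F_i\to\F$, the restriction $i^*\omega$ defines $\F$. Finally, $\codim\sing(\omega)\geq 2$: if $\omega$ had a divisorial zero component, so would $i^*\omega$, contradicting that it defines the foliation $\F$. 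This last implication is precisely the piece your openness argument does not supply.
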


\begin{proof}
    As seen above, if $\F$ is generated by a logarithmic 1-form, it has an extension. Furthermore, the logarithmic and rational components can be determined by the closure of the image of a rational map as \eqref{E:logmap}.

    Suppose $\F$ is in a logarithmic component $\Log(X,d_1,d_2,\dots,d_k)\subset\Folx{X}{1}{\sum d_i -2}$ and is an adherence value of the map $\Phi$. In this case, there exists a sequence of foliations $\{\F_i\}_{i=0}^{\infty}$ in the logarithmic component such that $\F_i\rightarrow \F$ and each $\F_i$ is defined by a logarithmic 1-form. Since each foliation $\F_i$ extends, there exists a sequence of foliations $\{\G_i\}_{i=0}^{\infty}$ in the logarithmic component $\Log(\p{N},d_1,d_2,\dots,d_k)$, where $\restr{\G_i}{X}=\F_i$ and each $\G_i$ is determined by a logarithmic 1-form $\omega_i$.
    Since the integrability condition determines a proper closed set in the class space $\Pj H^0(\p{N},\om{1}{N}(k)),$ the set of points adhering to the sequence $([\omega_i])_{i=0}^{\infty}$ is nonempty and each point in this set is integrable. Thus, given $[\omega]$ as one of these adherent points, its restriction to $X$ determines $\F$ by definition. Furthermore, we have that $\codim(\omega)\geq 2$, otherwise, $\codim(\rest(\omega))\leq 1$ and $\rest(\omega)$ would not determine $\F$. Therefore, $\omega$ determines a logarithmic foliation $\G$ on $\p{N}$ whose restriction is $\F$.
\end{proof}

{
In general, foliations on a complete intersection $X$ extend to the projective space when the degree of the foliation is sufficiently small compared to the degrees of the hypersurfaces defining the complete intersection. In the case of hypersurfaces, \cite[Theorem 3.6]{Fig23} shows that a foliation $\F$ has an unique extension if $\deg(X) > 2\deg(\F)+1$. For degree-one foliations on complete intersections, we have the following result, cf. \cite[Teorema 2.1.1]{TeseMateus}. 



{
\begin{theorem}\label{T:Extdeg1general}
Let $X$ be a smooth complete intersection in $\p{n+s}$ of type $(d_1,\ldots,d_s)$ and dimension $n\geq 3$. If $d_j\geq 4$ for $j  = 1, \dots, s$, then the restriction induces an isomorphism
\[
\Folx{X}{1}{1} \cong \Folx{\pn}{1}{1} .
\]
\end{theorem}

\begin{proof}
Let $\omega\in H^0(X, \Omega^1_X(3))$ be a section determining $\F$ on $X$. In light of Remark \ref{R:strong}, \cite[Proposition 5.22]{ACM-dist} holds for smooth complete intersections as in the hypotheses above. Thus, the restriction map
\[
r_1 \colon H^0(\p{n+s}, \Omega^1_{\p{n+s}}(3))\rightarrow H^0(X, \Omega^1_X(3))
\]
is an isomorphism, and 
\[
r_3 \colon H^0(\p{n+s}, \Omega^3_{\p{n+s}}(6))\rightarrow H^0(X, \Omega^3_{X}(6))
\]
is injective. Therefore, we have the following commutative square, see \cite[\S2]{BFM}, 
\[
\begin{tikzcd}
    S^2H^0(\p{n+s}, \Omega^1_{\p{n+s}}(3)) \ar[r, "S^2 r_1","\sim"'] \ar[d,two heads,"\Psi_{\p{n+s}}"] & S^2H^0(X, \Omega^1_X(3))\ar[d,"\Psi_X"] \\
    H^0(\p{n+s}, \Omega^3_{\p{n+s}}(6)) \ar[r,"r_3",hook] & H^0(X, \Omega^3_{X}(6))
\end{tikzcd}
\]
where $\Psi_Y(\omega\cdot \eta) = \omega \wedge d\eta + \eta\wedge d\omega$ for $Y = \p{n+s}$ or $Y = X$. Considering the projective space $\p{}H^0(Y, \Omega^1_{Y}(3))$, the dual of the image of $\Psi_Y$,  
\[
(\im \Psi_Y)^\vee \hookrightarrow S^2H^0(Y, \Omega^1_{Y}(3))^\vee, 
\]
generate the homogeneous ideal of the scheme of integrable forms. From the diagram above, we conclude that $\im \Psi_\pn$ is taken to $\im \Psi_X$ via the isomorphism of projective spaces defined by $r_1$. Hence the schemes ${\rm IF}(\p{n+s},3)$ and ${\rm IF}(X,3)$ of integrable forms are isomorphic. To conclude the isomorphism of the spaces of foliations, note that $r_1(\omega)$ has singularities in codimension one if and only if so does $\omega$. Indeed, if $r(\omega) = f\eta$ and $f$ is not constant, then $\deg(f) = 1$ and $\eta \in H^0(\Omega_X^1(2))$. Since both $f$ and $\eta$ have unique extensions to $\pn$, we get that $\omega$ vanishes along some hyperplane. Then $r_1$ preserves the open subset $U$ of forms without divisoral singularities. We conclude that 
\[
\Folx{\p{n+s}}{1}{1} = U \cap {\rm IF}(\pn,3) \cong \Folx{X}{1}{1}.
\]
\end{proof}
}

An immediate consequence is that the space of degree-one foliations on smooth complete intersections satisfying the hypothesis of the theorem is the same as the one for the projective space, cf. \cite[Corolário 2.2.7]{TeseMateus}.

\begin{corollary}\label{C:ext-comp-int-deg>4}
Let $X$ be a smooth complete intersection in $\p{n+s}$ of type $(d_1,\ldots,d_s)$ and dimension $n\geq 3$.
If $d_j\geq 4$ for $j  = 1, \dots, s$, then 
\[
\Folx{X}{1}{1} = \Log(X;1,2) \cup \Log(X;1,1,1).
\]
\end{corollary}
}

\subsection{Codimension-two foliations}

For codimension-two degree-{zero} foliations, we have the following results regarding extensions.

\begin{theorem}\label{T:Ext2codimgeneral}
Let $X$ be a smooth complete intersection in $\p{n+s}$ of type $(d_1,\ldots,d_s)$ and dimension $n\geq 4$.
If $d_j\geq 3$ for $j  = 1, \dots, s$, then any codimension-two foliation on $X$ of degree zero has a unique extension to $\p{n+s}$.
\end{theorem}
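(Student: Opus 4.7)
The plan is to extend the defining form of the foliation one hypersurface at a time, using the chain provided by Lemma \ref{L:complete-intersection}. Writing $X = X_s \subset X_{s-1} \subset \cdots \subset X_0 = \p{n+s}$ with each $X_i$ a smooth hypersurface of degree $d_i \geq 3$ in $X_{i-1}$, I would prove by induction that for each $i$ the restriction map
\[
H^0\!\bigl(X_{i-1}, \Omega^2_{X_{i-1}}(3)\bigr) \longrightarrow H^0\!\bigl(X_i, \Omega^2_{X_i}(3)\bigr)
\]
is an isomorphism. Composing these isomorphisms and invoking Proposition \ref{P:deg0-integrable} — which guarantees that every codimension-two degree-zero distribution on $\p{n+s}$ is automatically integrable — would then give the existence and the uniqueness of the extension simultaneously.

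For each inductive step I would split the conormal sequence of $X_i \subset X_{i-1}$ into
\[
0 \to \Omega^2_{X_{i-1}}(3-d_i) \to \Omega^2_{X_{i-1}}(3) \to \Omega^2_{X_{i-1}}(3)|_{X_i} \to 0,
\]
\[
0 \to \Omega^1_{X_i}(3-d_i) \to \Omega^2_{X_{i-1}}(3)|_{X_i} \to \Omega^2_{X_i}(3) \to 0,
\]
and chase the associated long exact sequences. Injectivity would reduce to the $H^0$-vanishings of the leftmost terms, and surjectivity to the corresponding $H^1$-vanishings. When $d_i \geq 4$ the twist $3 - d_i$ is strictly negative, so all four vanishings would follow from Kodaira–Akizuki–Nakano on the smooth complete intersections $X_{i-1}, X_i$ (whose dimensions are at least $n \geq 4$); Bott's formula takes care of the ambient case $X_0 = \p{n+s}$, and Hodge-theoretic Lefschetz ($h^{2,0} = h^{1,0} = 0$ for smooth complete intersections of dimension $\geq 3$) would handle the injectivity when $d_i = 3$.

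The hard part will be the surjectivity in the boundary case $d_i = 3$: then $H^1(X_i, \Omega^1_{X_i}) \cong H^{1,1}(X_i) \cong \CC$ is one-dimensional, spanned by the hyperplane class, and Kodaira–Akizuki–Nakano gives no information. To overcome this I would try to show that the connecting homomorphism $H^0(X_i, \Omega^2_{X_i}(3)) \to H^1(X_i, \Omega^1_{X_i})$ vanishes, identifying it up to a scalar with cup product against the conormal extension class $[df_i] \in H^1(X_i, \Omega^1_{X_i})$ and exploiting the fact that a degree-zero codimension-two form is decomposable off its singular locus. As a backup, a dimension count via the Koszul resolution of $\OX$ on $\p{n+s}$ — comparing $h^0(X, \Omega^2_X(3))$ with $h^0(\p{n+s}, \Omega^2_{\p{n+s}}(3)) = \binom{n+s+1}{3}$ — combined with the injectivity already in hand would force the restriction to be an isomorphism.
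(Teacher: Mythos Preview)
Your inductive scheme and the cohomological set-up are natural, and for $d_i\geq 4$ the vanishings you list do give the isomorphism of $H^0$'s. The genuine gap is the boundary case $d_i=3$: there the map
\[
H^0\!\bigl(X_{i-1},\Omega^2_{X_{i-1}}(3)\bigr)\longrightarrow H^0\!\bigl(X_i,\Omega^2_{X_i}(3)\bigr)
\]
is \emph{not} an isomorphism, so both your main line and your backup dimension count fail. Concretely, take $X\subset\p5$ a smooth cubic fourfold. One has $h^0(\p5,\Omega^2_{\p5}(3))=\binom{6}{3}=20$, whereas $h^0(X,\Omega^2_X(3))=21$. To see the extra dimension, look at your second sequence and the induced map $j\colon H^1(X,\Omega^1_X)\to H^1(X,\Omega^2_{\p5}(3)|_X)$: the inclusion $\Omega^1_X\hookrightarrow\Omega^2_{\p5}(3)|_X$ is obtained from $\alpha\mapsto df|_X\wedge\tilde\alpha$, so $j$ factors through $H^1(\Omega^1_{\p5}|_X)\xrightarrow{\wedge df}H^1(\Omega^2_{\p5}(3)|_X)$. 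But this last map is the restriction of $H^1(\Omega^1_{\p5})\xrightarrow{\wedge df}H^1(\Omega^2_{\p5}(3))=0$, and the restriction $H^1(\Omega^1_{\p5})\to H^1(\Omega^1_{\p5}|_X)$ is an isomorphism (Bott). Hence $j=0$, the connecting map $\delta\colon H^0(\Omega^2_X(3))\to H^1(\Omega^1_X)\cong\CC$ is \emph{surjective}, and the restriction has a one-dimensional cokernel. Your proposed identification of $\delta$ with a cup product would, if carried out, show the opposite of what you want; and the dimension count confirms $21\neq 20$ rather than forcing equality.

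The theorem is nonetheless true because it concerns \emph{distributions}, not arbitrary twisted $2$-forms: the extra sections in $H^0(\Omega^2_X(3))$ are not locally decomposable. This is exactly why the paper does not argue via an isomorphism of $H^0$'s but instead invokes \cite[Corol\'ario 2.3.3]{TeseMateus}, which extends codimension-two degree-zero \emph{distributions} (using the LDS condition, not just the form), and then applies Proposition~\ref{P:deg0-integrable}. If you want to repair your argument, you must use local decomposability of $\omega_X$ to show $\delta(\omega_X)=0$; merely noting that $\omega_X$ is LDS does not make this automatic, and your sketch does not supply the missing step. A secondary gap you should also address: even once a lift $\omega\in H^0(\p{n+s},\Omega^2(3))$ is produced, Proposition~\ref{P:deg0-integrable} requires $\omega$ to define a distribution, i.e.\ to be LDS on $\p{n+s}$, which is not immediate from $\omega|_X$ being LDS.
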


\begin{proof}
Let $\F$ be a codimension-two foliation on $X$ of degree zero. In particular, $\F$ is a codimension-two distribution on $X$ of degree zero. Applying \cite[Corolário 2.3.3]{TeseMateus}, we see that $\F$ has a unique extension as a codimension-two distribution to $\p{n+s}$ of degree zero. Finally, by Proposition \ref{P:deg0-integrable}, we conclude that the extension of $\F$ is integrable.       
\end{proof}

\begin{corollary}\label{C:Ext2codim}
Let $X$ be a smooth hypersurface of $\p{n+1}$, $n\geq 4$. 
If $\F$ is a codimension-two holomorphic foliation on $X$ of degree zero then $\F$ extends {to} $\p{n+1}$.
\end{corollary}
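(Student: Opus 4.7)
The plan is to separate by the degree $d := \deg(X)$. For $d \geq 3$, the result is immediate from Theorem \ref{T:Ext2codimgeneral} with $s = 1$, so only the cases $d = 1$ and $d = 2$ require attention, since the hypothesis $d_j \geq 3$ of the theorem rules them out.

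The case $d = 1$ is straightforward: $X$ is a hyperplane, and after a linear change of coordinates we may assume $X = \{x_{n+1} = 0\}$. The linear projection $\pi\colon \p{n+1}\dashrightarrow X$ from $[0:\cdots:0:1]\notin X$ satisfies $\pi|_X = \mathrm{id}_X$, hence $\pi^*\F$ is a codimension-two foliation of degree zero on $\p{n+1}$ restricting to $\F$.

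The substantive case is $d = 2$, where $X$ is a smooth quadric of dimension $n \geq 4$. Here the plan is to lift the defining $2$-form $\omega\in H^0(X,\omx{2}(3))$ to $\p{n+1}$ by cohomology. Twisting the conormal sequence by $\OX(3)$ yields
\[
0 \to \omx{1}(1) \to \om{2}{n+1}(3)|_X \to \omx{2}(3) \to 0,
\]
so the first obstruction lies in $H^1(X,\omx{1}(1))$. To show this vanishes, I would twist the conormal sequence by $\OX(1)$ and combine it with the restricted Euler sequence: the map $H^0(\OX)^{n+2}\to H^0(\OX(1))$ is an isomorphism when $d=2$ (and $H^1(\OX)=0$ by Kodaira), which forces $H^1(X,\om{1}{n+1}(1)|_X)=0$; together with $H^1(\OX(-1))=H^2(\OX(-1))=0$ (Kodaira, $X$ Fano with $\OX(1)$ ample, $n\geq 3$) this gives $H^1(X,\omx{1}(1))=0$. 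A second lift from $X$ to $\p{n+1}$ is then unobstructed thanks to Bott's vanishing $H^1(\p{n+1},\om{2}{n+1}(1))=0$, producing $\hat\omega\in H^0(\p{n+1},\om{2}{n+1}(3))$ restricting to $\omega$.

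It then remains to verify that $\hat\omega$ defines a foliation. The LDS condition $\hat\omega\wedge\hat\omega = 0$ in $H^0(\p{n+1},\om{4}{n+1}(6))$ holds because its restriction $\omega\wedge\omega=0$ vanishes on $X$ and the restriction map is injective: its kernel $H^0(\p{n+1},\om{4}{n+1}(4))$ vanishes by Bott (the exponent $k=4$ does not exceed $p=4$). Integrability is then automatic by Proposition \ref{P:deg0-integrable}. The main obstacle is precisely the quadric case, where the two cohomological vanishings $H^1(X,\omx{1}(1)) = 0$ and $H^0(\p{n+1},\om{4}{n+1}(4)) = 0$ must be argued by hand, since Theorem \ref{T:Ext2codimgeneral} excludes $d\leq 2$ and the naive Hodge-theoretic obstruction $H^1(X,\omx{1}) = \CC$ would reappear were one to try the same lift for $d = 3$.
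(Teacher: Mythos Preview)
Your reduction for $\deg(X)\geq 3$ matches the paper's, and the hyperplane case is harmless. For the quadric, however, the paper does not lift the $2$-form at all: it simply invokes \cite[Proposition~3.18]{AD-fano2}, which says that every codimension-two degree-zero foliation on a smooth quadric $\qn$, $n\geq 3$, is induced by a linear projection $\p{n+1}\dashrightarrow\p2$; extension is then immediate.

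Your cohomological lifting is correct up to the existence of $\hat\omega$: the vanishings $H^1(X,\omx{1}(1))=0$ and $H^1(\p{n+1},\om{2}{n+1}(1))=0$ are fine, and in fact both lifting steps are unique. The gap is in the LDS step. You need $\hat\omega\wedge\hat\omega=0$ to follow from $\omega\wedge\omega=0$, and for this you claim that ``the restriction map'' $H^0(\p{n+1},\om{4}{n+1}(6))\to H^0(X,-)$ is injective with kernel $H^0(\p{n+1},\om{4}{n+1}(4))$. But $\omega\wedge\omega$ lives in $H^0(X,\omx{4}(6))$, so the relevant map is the pullback $i^*$, and its kernel is \emph{not} just $H^0(\p{n+1},\om{4}{n+1}(4))$: one must also pass through the quotient $\om{4}{n+1}(6)|_X\to\omx{4}(6)$, whose kernel on global sections is $H^0(X,\omx{3}(4))$. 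For a quadric of dimension $n\geq 4$ this space is nonzero (it contains, e.g., the $3$-forms defining linear projections to $\p3$), so injectivity of $i^*$ is not established by your argument. Concretely, you have only shown that $(\hat\omega\wedge\hat\omega)|_X$ lies in the image of $H^0(X,\omx{3}(4))\hookrightarrow H^0(X,\om{4}{n+1}(6)|_X)$, not that it is zero there. Without closing this, you cannot conclude that $\hat\omega$ is LDS, and hence cannot appeal to Proposition~\ref{P:deg0-integrable}.
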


\begin{proof}
 It follows by Theorem \ref{T:Ext2codimgeneral}, if $\deg(X)\geq 3$ and, for quadrics, by \cite[Proposition 3.18]{AD-fano2}.    
\end{proof}


\section{Structure theorem for foliations on complete intersections}\label{Sec-structure}
In this section, we prove our main technical result, Theorem~\ref{T:general-cov-by-lines}. Then, we derive the structure of degree-one foliations on smooth complete intersections and degree-two foliations on quadric hypersurfaces. We start with some preparation.

\subsection{Tangential foliations}
A smooth projective variety $X$ is called \emph{uniruled} if, for every general point $x \in X$, there exists a non-constant morphism $f\colon \p1 \to X$ such that the rational curve $C=f(\p1)$ passes through $x$ and  $f^*\TX$ is generated by global sections; such morphisms are called free. The scheme $\Mor(\p1, X)$ parametrizing morphisms from $\p1$ to $X$ is smooth around a free morphism $f$, and its tangent space is naturally isomorphic to $H^0(X,f^*\TX)$. We refer to \cite{Deb-book,Kollar-book} for further details. A particular result we will need is the following.

\begin{lemma}\label{L:free-avoidance} Let $X$ be a smooth uniruled projective variety, let $f\colon \p1 \to X$ be a free morphism, and $Z\subset X$ be a closed subvariety of codimension at least two. Then, $f'(\p1) \cap Z = \emptyset$
for a general deformation $f'$ of $f$. 
\end{lemma}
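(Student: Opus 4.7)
The plan is a standard parameter count on the scheme $\Mor(\p1,X)$, using that the freeness of $f$ forces the global evaluation map to be smooth along $\{f\}\times \p1$, so that meeting a codimension-two locus is itself a codimension-one condition on deformations of $f$.

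First, I would recall the basic deformation theory we are given: the scheme $M:=\Mor(\p1,X)$ is smooth at $[f]$ with tangent space $T_{[f]}M \cong H^0(\p1,f^*\TX)$, and because $f$ is free, the bundle $f^*\TX$ is globally generated on $\p1$. Consider the evaluation morphism
\[
\mathrm{ev}\colon M\times \p1 \lra X,\qquad ([g],t)\longmapsto g(t).
\]
At a point $([f],t)$, the differential of $\mathrm{ev}$ restricted to the first factor is the fiberwise evaluation map
\[
H^0(\p1,f^*\TX)\lra (f^*\TX)_t = T_{f(t)}X,
\]
which is surjective because $f^*\TX$ is globally generated. Hence $\mathrm{ev}$ is smooth at $([f],t)$ for every $t\in \p1$.

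Next I would pull back $Z$. If $f(\p1)\cap Z=\emptyset$ there is nothing to prove, so assume otherwise. On an open neighborhood $V\subset M\times \p1$ of $\{[f]\}\times \p1$, the map $\mathrm{ev}|_V$ is smooth, and therefore the scheme-theoretic preimage
\[
W:=\mathrm{ev}^{-1}(Z)\cap V
\]
has pure codimension $\codim_X(Z)\geq 2$ in $V$. Let $\pi\colon M\times \p1 \to M$ denote the first projection. Since the fibers of $\pi|_W$ are contained in $\p1$, they have dimension at most $1$, so
\[
\codim_M \pi(W) \;\geq\; \codim_V(W) - 1 \;\geq\; 1.
\]
Thus $\pi(W)$ is a proper closed subset of an open neighborhood of $[f]$ in $M$, and any deformation $[f']$ of $[f]$ lying outside $\pi(W)$ satisfies $f'(\p1)\cap Z=\emptyset$, as required.

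The only subtlety I foresee is ensuring that the codimension of $W$ is really computed in $V$ rather than in all of $M\times \p1$; this is precisely why the smoothness of $\mathrm{ev}$ at $([f],t)$, coming from the free hypothesis on $f$, is essential. Beyond that, the argument is a routine incidence/dimension count and is the standard ``general free curve avoids codimension-two loci'' lemma (cf.\ \cite{Kollar-book}).
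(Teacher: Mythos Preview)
Your argument is correct and is precisely the standard dimension count underlying \cite[II, Proposition~3.7]{Kollar-book}, which is all the paper invokes; so you are spelling out what the paper merely cites. One small imprecision: $\pi(W)$ need not be closed, only constructible, but since $\dim \pi(W)\leq \dim W \leq \dim M -1$ its closure is still a proper subvariety near $[f]$, and the conclusion follows unchanged.
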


\begin{proof}
    This is a particular case of \cite[II, Proposition 3.7]{Kollar-book}.
\end{proof}

\begin{lemma}\label{L:gen-free}
Let $X$ be a smooth complex quasi-projective variety. There exists a non-empty subset $X^{\rm free}$ of $X$ which is the intersection of countably many dense open subsets of $X$, such that any rational curve on $X$ whose image meets $X^{\rm free}$ is free.
\end{lemma}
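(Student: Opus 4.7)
The plan is to construct $X^{\rm free}$ as the complement of a countable union of proper Zariski closed subvarieties of $X$, each arising from an irreducible family of non-free morphisms $\p1\to X$. I may assume $X$ is irreducible (otherwise I apply the argument to each component). The scheme $\Mor(\p1,X)$ is locally of finite type---obtained, for instance, by embedding $X$ in a projective compactification and stratifying morphisms by degree against a fixed ample line bundle---so it has at most countably many irreducible components $\{M_k\}_{k\in\mathbb{N}}$. Freeness is an open condition, since the splitting type of $f^*TX$ on $\p1$ is upper semi-continuous in flat families; hence the non-free locus $M_k^{\rm nf}\subset M_k$ is Zariski closed and, being Noetherian, decomposes into finitely many irreducible components $V_{k,1},\dots,V_{k,r_k}$.

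The central step is to show that for each $V=V_{k,j}$ the closure $Z_V:=\overline{\mathrm{ev}(\p1\times V)}$ of the image of the evaluation morphism is a proper closed subvariety of $X$. I would invoke the following classical deformation-theoretic theorem on smooth varieties (see \cite[Ch.~II,~\S3]{Kollar-book}): if $V\subset\Mor(\p1,X)$ is an irreducible subscheme whose evaluation map is dominant and $X$ is smooth, then a general $f\in V$ is free. Since $V\subset M_k^{\rm nf}$ contains no free morphism by construction, the evaluation on $\p1\times V$ must be non-dominant, whence $Z_V\subsetneq X$. Taking the finite union $Z_k:=\bigcup_j Z_{V_{k,j}}$ yields a proper Zariski closed subset $Z_k\subsetneq X$.

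Setting
\[
X^{\rm free} := X\setminus\bigcup_{k\in\mathbb{N}} Z_k,
\]
I obtain a countable intersection of dense Zariski open subsets of $X$. Non-emptiness follows from the Baire category theorem in the analytic topology: each $X\setminus Z_k$ is analytically open and dense in the complex manifold $X^{\mathrm{an}}$, which is locally compact Hausdorff and hence Baire. To conclude, let $C\subset X$ be a rational curve meeting $X^{\rm free}$ and $f\colon\p1\to X$ any parameterization; then $f\in M_k$ for some $k$, and if $f$ were non-free we would have $f\in M_k^{\rm nf}$ and thus $f(\p1)\subset Z_k$, contradicting $C\cap X^{\rm free}\neq\emptyset$. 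Therefore $f$ is free. The main obstacle is the dominance-implies-freeness step, which relies crucially on the smoothness of $X$ via the infinitesimal analysis of the evaluation map at a pair $(t,f)\in\p1\times V$; on a singular variety, dominant families of non-free rational curves can exist and the whole strategy collapses.
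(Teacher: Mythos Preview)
Your argument is correct and is essentially the standard proof that the paper invokes by citation: the paper does not prove the lemma directly but refers to \cite[Proposition~4.14]{Deb-book} for the construction of $X^{\rm free}$ and then observes that non-emptiness follows because the base field $\CC$ is uncountable. Your proof reconstructs precisely that argument---countably many components of $\Mor(\p1,X)$, openness of freeness, and the key fact that a dominating irreducible family on a smooth variety has free general member---so the approaches coincide. The only cosmetic difference is that you appeal to Baire category in the analytic topology for non-emptiness, while the paper (and Debarre) phrase it as the standard fact that over an uncountable field a countable intersection of dense Zariski opens is non-empty; these are equivalent here.
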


\begin{proof}
See \cite[Proposition 4.14]{Deb-book} for the proof of the existence of $X^{\rm free}$ over any algebraically closed field of characteristic zero. { Since $\CC$ is uncountable,} $X^{\rm free}\neq \emptyset$.
\end{proof}

Given a foliation $\F$ on $X$ and an irreducible component $M \subset \Mor(\p1, X)$ containing a free morphism. One defines the \emph{tangential foliation} of $\F$ on $M$, denoted  by $\F_{\tang}$,   setting for  a general free morphism  $f\colon \p1\dashrightarrow X$   the tangent space of $\F_{\tang}$ at $f$ equal to  $H^0(\p1, f^*T_{\F})$,  via the identification of the tangent space of $\mathrm{Mor}(\mathbb P^1,X)$ at the morphism $f$ with $ H^0(\p1, f^*T_X)$ and the inclusion $H^0(\p1, f^*\TF) \subset H^0(\p1,f^*\TX)$.  More precisely, the tangential foliation is defined as follows. Consider the evaluation map 
\begin{align*}
    \mathrm{ev}\colon& M\times \p1 \longrightarrow X\\
    &([f],x)\longmapsto f(x)    
\end{align*}
and the natural projection $\pi\colon  M\times \p1 \longrightarrow M$. {Consider $\mathcal{H}$ the foliation on $M\times \p1$ given by the fibers of the projection to $\p1$. Then $\F_{\tang}$ is defined as the direct image $\pi_*(\mathcal{H}\cap \mathrm{ev}^*\F)$, i.e., the tangent sheaf of $\F_{\tang}$ is the saturation in $T_M$ of the image of the composition
\[
\pi_*T_{(\mathcal{H}\cap \mathrm{ev}^*\F)} \longrightarrow \pi_*T_{M\times \p1} \longrightarrow T_M.
\]
}
In \cite{LPT-def,kuster2025}, the authors show how the geometry of $\F_{\rm tang}$ impacts the geometry of $\F$. We recall below two of their results. 

\begin{proposition}\label{P:Ftang-algint}
\cite[Proposition 4.10]{LPT-def} Let $X$ be a smooth uniruled projective variety and let $\F$ be a codimension-one
 foliation on $X$. If all leaves of $\F_{\rm tang}$ are algebraic, then there exists an algebraically integrable foliation $\G$ contained in $\F$ such that $H^0(\p1,f^*\TF) = H^0(\p1,f^*\TG)$ for any sufficiently general $f\in M$. In particular, $\F$ is the pullback under a rational map of a foliation on a lower-dimensional manifold. 
\end{proposition}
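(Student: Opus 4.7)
The plan is to promote the algebraic first integral of $\F_{\rm tang}$ on $M$ to a rational map on $X$ whose fibers realize the leaves of the sought-for subfoliation $\G \subseteq \F$. Since every leaf of $\F_{\rm tang}$ is algebraic, one obtains a dominant rational map $\rho\colon M \dashrightarrow N$ whose general fiber is (a component of) the closure of such a leaf. For a general free $[f] \in M$, the tangent space at $[f]$ to the fiber $L := \rho^{-1}(\rho(f))$ coincides with $H^0(\p1, f^*\TF)$.

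To descend this structure to $X$, I would form the joint rational map
\[
\Psi = (\mathrm{ev},\, \rho \circ p_M) \colon M \times \p1 \dashrightarrow X \times N,
\]
where $p_M$ is the projection to $M$, and let $W$ be the closure of its image with projections $q\colon W \to X$ and $r\colon W \to N$. Applying Stein factorization to $r$ (possibly after replacing $W$ by a suitable component) produces a variety $Z$; using uniruledness of $X$ together with the algebraic closedness of the leaves of $\F_{\rm tang}$, one checks that $q$ is birational, so that composing $q^{-1}$ with $W \dashrightarrow Z$ yields a rational map $\pi\colon X \dashrightarrow Z$. Define $\G$ to be the foliation on $X$ whose leaves are the closures of the general fibers of $\pi$. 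By construction, $\G$ is algebraically integrable. To establish $\G \subseteq \F$, note that for a general $[f]$ in a leaf $L$ of $\F_{\rm tang}$ and any $t \in \p1$, evaluation at $t$ sends $T_{[f]}L = H^0(\p1, f^*\TF)$ into $\TF_{f(t)}$, while $\mathrm{ev}_t(L)$ lies in the fiber of $\pi$ through $f(t)$ and hence inside the leaf of $\G$ through $f(t)$; this forces $T\G \subseteq \TF$ on a dense open set of $X$. The inclusion $\G \subseteq \F$ immediately yields $H^0(\p1, f^*\TG) \subseteq H^0(\p1, f^*\TF)$, and the reverse inclusion is immediate because any $v \in H^0(\p1, f^*\TF)$ is tangent to $L$ at $[f]$, so $\mathrm{ev}_t(v) \in T_{f(t)}\G$ for every $t$. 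The pullback conclusion then follows: since $\G \subseteq \F$ and $\G$ is the fibration by $\pi$, the foliation $\F$ descends to a codimension-one foliation $\bar{\F}$ on $Z$ with $\F = \pi^*\bar{\F}$, and $\dim Z < \dim X$ because $\G$ has positive rank.

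The main obstacle is the rigorous construction of $\pi\colon X \dashrightarrow Z$ in the middle step---specifically, showing that $q\colon W \to X$ has generically connected (indeed birational) fibers, so that a genuine rational map out of $X$ exists, and that the family $\{\mathrm{ev}(L \times \p1)\}_L$ organizes into the leaves of an honest foliation rather than a merely singular distribution. Concretely, one must verify that the germs at a common point $x \in X$ of $\mathrm{ev}(L \times \p1)$ and $\mathrm{ev}(L' \times \p1)$ coming from distinct leaves $L, L'$ of $\F_{\rm tang}$ are compatible. This compatibility ultimately rests on the interplay between the algebraicity of $\rho$'s fibers and the abundance of free curves through $x$ provided by uniruledness, and its verification requires careful bookkeeping via Stein factorization and generic smoothness of the evaluation map restricted to leaves.
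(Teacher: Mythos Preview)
The paper does not prove this proposition; it is quoted from \cite[Proposition 4.10]{LPT-def} and used as a black box in the proof of Theorem~\ref{T:general-cov-by-lines}. There is no in-paper argument to compare against.

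On its own merits, your sketch has a gap more serious than the birationality of $q$ that you flag. With your construction the leaves of $\G$ are the subvarieties $V_L := \overline{\mathrm{ev}(L \times \p1)}$, and in the relevant setting the general $f\in M$ is transverse to $\F$ (cf.\ Lemma~\ref{P:exist-non-invar-curves} and Corollary~\ref{C:line-not-invar}). At a general point $f(t)\in V_L$ one then has
\[
T_{f(t)} V_L \;=\; d\,\mathrm{ev}\bigl(T_{[f]}L \oplus T_t\p1\bigr) \;=\; \{\, s(t) : s\in H^0(\p1, f^*\TF)\,\} + \CC\cdot f'(t),
\]
with $f'(t)\notin \TF_{f(t)}$. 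Hence $V_L$ is \emph{not} tangent to $\F$, and the $\G$ you build is not a subfoliation of $\F$. Your sentence ``this forces $T\G \subseteq \TF$'' only shows that the tangent space of $\mathrm{ev}_t(L)$ (for a \emph{fixed} $t$) lies in both $\TG$ and $\TF$; it ignores the extra $\p1$-direction that $V_L$ acquires as $t$ varies.

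The subfoliation asserted in the proposition has $\TG_{f(t)}$ equal to the evaluation image of $H^0(\p1, f^*\TF)$ \emph{without} the summand $\CC\cdot f'(t)$; compare Remark~\ref{R:subfol}, where $f^*\TG$ is the nonnegative part of $f^*\TF$. Constructing $\G$ with this smaller tangent sheaf --- and showing the prescription is independent of the curve through a given point, integrable, and algebraically integrable --- is the substantive content of the cited result, and it is not what your $W$-and-Stein-factorization construction produces.
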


\begin{theorem}\label{T:Ftang-NOTalgint}
\cite[Theorem C]{LPT-def} Let $X$ be a simply connected uniruled projective manifold and let $\F$ be a
 codimension-one foliation on $X$.
Suppose that the general leaf of $\F_{\rm tang}$ is not algebraic, and a general $[f]\in M$ intersects nontrivially and transversely all the algebraic hypersurfaces invariant by $\F$. Then $\F$ is defined by a closed rational $1$-form without divisorial components in its zero set.
\end{theorem}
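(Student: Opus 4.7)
The approach is to construct a closed rational $1$-form $\eta$ defining $\F$ using a transverse (affine) structure built from deformations of rational curves inside leaves of $\F_{\mathrm{tang}}$, and then to use simple connectedness of $X$ and the transversality of free curves with $\F$-invariant hypersurfaces to show that the associated holonomy is purely residual. Let $\omega$ be a twisted $1$-form defining $\F$, and let $H \subset X$ denote the union of the algebraic hypersurfaces invariant by $\F$.

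\emph{First step: local transverse structure.} For a general free $f \in M$, the pullback $f^*\omega$ is a section of $\Omega^1_{\p{1}}\otimes f^*N_\F$, and by the transversality hypothesis its zero divisor consists of simple zeros exactly at the points where $f(\p{1})$ meets $H$. By definition of $\F_{\mathrm{tang}}$, infinitesimal deformations of $f$ inside a leaf of $\F_{\mathrm{tang}}\subset M$ are tangent to $\F$, so, via the evaluation morphism $\mathrm{ev}\colon M\times \p{1}\to X$, they sweep out a (dense open subset of a) leaf of $\F$ on $X$. Deformations of $f$ in directions of $M$ transverse to $\F_{\mathrm{tang}}$ then project to directions transverse to $\F$ at each point of the sweep. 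This yields, around each point of $X\setminus(H\cup\sing(\F))$, a local holomorphic submersion whose fibers are plaques of $\F$, i.e.\ a local first integral for $\F$.

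\emph{Second step: globalization via monodromy.} Analytic continuation of these local first integrals defines a monodromy representation of $\pi_1(X\setminus(H\cup\sing(\F)))$ into $\mathrm{Aff}(\CC)$. Since $\sing(\F)$ has codimension at least two and can be ignored topologically, and since $X$ is simply connected, this fundamental group is generated by small loops around the irreducible components of $H$. The transversality of a general $f$ with each component of $H$, combined with a local residue analysis of $\omega$ near that component, forces the monodromy around each such loop to be a translation by the corresponding residue. Hence the representation factors through the translation subgroup of $\mathrm{Aff}(\CC)$, and the multivalued first integral is of the form $\int \eta$ for a globally defined closed rational $1$-form $\eta$ on $X$ with polar divisor supported in $H$.

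\emph{Main obstacle.} The hardest step is the monodromy trivialization, i.e.\ ruling out any genuinely affine (non-translation) holonomy around components of $H$ and organizing the analytic continuation so that the leaves of $\F_{\mathrm{tang}}$ in $M$ (which are not algebraic) still produce a single-valued residue structure on $X$; this uses Lemma~\ref{L:free-avoidance} to keep free curves in general position throughout the argument. Once $\eta$ is in hand, a hypothetical divisorial zero component of $\eta$ would be $\F$-invariant, hence a component of $H$, and would force $f^*\eta$ to vanish to order at least two at a transverse intersection point, contradicting the transversality hypothesis. Thus $\F$ is defined by a closed rational $1$-form $\eta$ without divisorial zeros, as claimed.
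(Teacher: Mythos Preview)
The paper does not give its own proof of this statement; it is quoted as Theorem~C of \cite{LPT-def} and used as a black box in the proof of Theorem~\ref{T:general-cov-by-lines}. So there is no in-paper argument to compare against.

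Evaluating your sketch on its own: the broad strategy (produce a transverse structure from the family of free curves, then globalize using $\pi_1$) is in the right spirit, but there is a genuine gap at the start of Step~2. You assert that analytic continuation of local first integrals yields a monodromy representation into $\mathrm{Aff}(\CC)$, yet you give no reason why the transition maps between different local first integrals should be \emph{affine} rather than arbitrary germs of biholomorphisms. Establishing that $\F$ is transversely affine (equivalently, that there exists a closed rational $1$-form $\eta_0$ with $d\omega = \eta_0 \wedge \omega$) is precisely the heart of the theorem, and it is exactly here that the hypothesis ``the general leaf of $\F_{\mathrm{tang}}$ is not algebraic'' must do real work. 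In your write-up that hypothesis appears only decoratively in Step~1: you note that deformations inside a leaf of $\F_{\mathrm{tang}}$ sweep out a leaf of $\F$, but you never extract any constraint from the fact that these $\F_{\mathrm{tang}}$-leaves are Zariski-dense in $M$. As written, Steps~1--2 would run unchanged if $\F_{\mathrm{tang}}$ were algebraically integrable, which cannot be correct in light of Proposition~\ref{P:Ftang-algint}. The passage from ``family of free rational curves tangent to $\F$'' to ``transversely affine structure'' is the substantive content of \cite[Theorem~C]{LPT-def}, and your proposal skips it.
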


\begin{remark} \label{R:subfol}
In Proposition~\ref{P:Ftang-algint}, we observe that $f^*\TG$ contains the nonnegative part of $f^*\TF$. Let $a_1\geq \dots \geq a_r$ and $b_1 \geq \dots \geq b_s$ be integers such that $f^*\TF = \bigoplus_{i=1}^r\op1(a_i)$ and  $f^*\TG = \bigoplus_{i=1}^s\op1(b_i)$. The inclusion $f^*\TG \subset f^*\TF$ implies that $a_i \geq b_i$ for $i=1, \dots, s$. Let $l = \max\{\,j \mid a_j \geq 0 \,\}$ and $m = \max\{\,j \mid b_j \geq 0 \,\}$. The equality $H^0(\p1,f^*\TF) = H^0(\p1,f^*\TG)$ implies that $m=l$, and $a_i = b_i$ for $i \leq m$.
\end{remark}

We will often consider rational curves that are not tangent to the foliation. When $\Pic(X) \cong \ZZ$, this is possible due to the following result, which was communicated to us by Jorge Vit\'orio Pereira.

\begin{lemma}\label{P:exist-non-invar-curves}
Let $X$ be a smooth uniruled complex projective variety such that $\Pic(X) \cong \ZZ$. If $f\colon \p1\to X$ is a free morphism and $\F$ is a foliation on $X$, then a general deformation $f'$ of $f$ is not $\F$-invariant.
\end{lemma}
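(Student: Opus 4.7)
The plan is to work with the irreducible component $M \subset \Mor(\p{1}, X)$ containing $f$, along with its closed subvariety $M^{\F} \subset M$ of $\F$-invariant morphisms, and to show that $M^{\F}$ is a proper subset of $M$. Since the free locus $M^{\mathrm{free}}\subset M$ is open and dense, it suffices to prove that $M^{\F}\cap M^{\mathrm{free}}$ is a proper subset of $M^{\mathrm{free}}$. I argue by contradiction: assume a free $g\in M^{\F}$ exists. By Lemma~\ref{L:free-avoidance}, applied to $\sing(\F)\subset X$ of codimension at least two, I may further arrange that $g(\p{1})\cap \sing(\F) = \emptyset$, so that $g(\p{1})$ lies in a smooth leaf $L$ of $\F$.

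The key ingredient is the Bott partial connection on $\nF$ along $\TF$. On the smooth leaf $L$, where $\TF|_L = TL$, the Bott connection restricts to a holomorphic flat connection on $\nF|_L$. Pulling it back via $g\colon \p{1}\to L$ produces a flat holomorphic connection on $g^*\nF$; since $\pi_{1}(\p{1}) = 0$, this flat bundle is holomorphically trivial, and in particular $\deg g^*\nF = \deg(g)\cdot c_{1}(\nF) = 0$. Combined with $\deg(g) > 0$ and $\Pic(X) \cong \ZZ$, this forces $c_{1}(\nF) = 0$, equivalently $\det \nF \cong \OO_X$.

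Next I would exclude $\det \nF \cong \OO_X$ using the global geometry of $X$. Writing $q := \codim \F$, the foliation $\F$ is defined by a non-zero twisted $q$-form $\omega \in H^{0}(X,\omx{q}\otimes \det \nF)$, which under $\det \nF \cong \OO_X$ would lie in $H^{0}(X,\omx{q})$. However, by the Boucksom--Demailly--Paun--Peternell theorem, $X$ uniruled implies $K_X$ is not pseudoeffective, and since $\Pic(X)\cong \ZZ$ makes $K_X$ an integer multiple of the ample generator, it follows that $-K_X$ is ample, i.e., $X$ is Fano. Then by Campana, and independently Kollar--Miyaoka--Mori, $X$ is rationally connected, which forces $H^{0}(X,\omx{q})=0$ for every $q\geq 1$ (pulling back along a very free rational curve makes the claim immediate). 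Hence $\omega=0$, contradicting the existence of $\F$; so no such $g$ exists.

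The hardest step is the passage from leaf-wise Bott flatness to global triviality of $g^*\nF$ on $\p{1}$: this genuinely requires $g(\p{1})\cap \sing(\F)=\emptyset$, since otherwise the leaf $L$ containing $g(\p{1})$ is smooth only away from finitely many points and the flat-connection argument does not extend globally. Lemma~\ref{L:free-avoidance} is precisely what legalises this avoidance hypothesis: free morphisms in $M$ meeting $\sing(\F)$ form a proper closed subset of $M^{\mathrm{free}}$. Combining everything, $M^{\F}\cap M^{\mathrm{free}}$ is contained in this proper closed subset, so a general $f'\in M$ is free and not $\F$-invariant.
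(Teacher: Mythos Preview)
Your proof is correct and follows essentially the same approach as the paper's: avoid $\sing(\F)$ via Lemma~\ref{L:free-avoidance}, use Bott's partial connection to force $g^*\nF$ trivial, deduce $c_1(\nF)=0$ from $\Pic(X)\cong\ZZ$, and obtain a nonzero global holomorphic $q$-form on a Fano variety, which is impossible. The only cosmetic differences are that the paper cites Weil's theorem \cite{Atiyah-conn} for the triviality of a bundle with holomorphic connection on $\p1$ (you use flatness and $\pi_1(\p1)=0$), and the paper derives Fano and $h^{p,0}=0$ directly (you go through BDPP and rational connectedness); both routes are valid and lead to the same contradiction.
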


\begin{proof}
Due to Lemma \ref{L:free-avoidance}, we may choose $f'$ so that $f'(\p1)$ does not intersect $\sing(\F)$. Aiming for a contradiction, suppose $f'$ is tangent to $\F$. Recall Bott's partial connection $\nabla^B \colon \TF \otimes \nF \to \nF$ that is defined as follows. Let $v\in \TF$ and $\overline{w} \in \nF$ be local sections, and $w\in \TX$ a lift of $w$. Then $\nabla^B_v(\overline{w}) = \overline{[v,w]}$. Since $f'$ is tangent to $\F$, we get that $f'^*\nabla^B$ is a holomorphic connection on the vector bundle $f'^*\nF$ on $\p1$. By Weil's Theorem \cite[Theorem 10]{Atiyah-conn}, $f'^*\nF$ must be trivial. Using that $\Pic(X) \cong \ZZ$, we conclude that $c_1(\nF) = 0$, hence $\F$ is given by a global holomorphic $p$-form $\omega \in H^0(X,\Omega^p_X)$, $p=\codim \F$. On the other hand, since $X$ is uniruled with $\Pic(X) \cong \ZZ$, it follows that $X$ is Fano. Hence
$h^0(X,\Omega^p_X) = h^{0,p}(X) = 0$, which is a contradiction.
\end{proof}

Next, we turn to a particular case of uniruled varieties: varieties \emph{covered by lines}.  
Let $ L \in \Pic(X) $ be an ample line bundle. A morphism $ f\colon \p1 \to X $ is called a \emph{line} (with respect to $ L $) if  
\[
\int_{\p1} f^* c_1(L) = 1.
\]  
The usual setting is when $ X \subset \pn $ and $ L = \OO_X(1) $, meaning that $ f(\p1) $ is a genuine line of $ \pn $ contained in $ X $.  
A variety $ X $ is said to be covered by lines if there exists a line passing through a general point of $ X $. It follows from Lemma~\ref{L:gen-free} that every line passing through a point in $ X^{\mathrm{free}} $ must be free. The following corollary is an immediate consequence of {Lemma}~\ref{P:exist-non-invar-curves}.  

\begin{corollary}\label{C:line-not-invar}
Let $X$ be a smooth complex projective variety covered by lines such that $\Pic(X) \cong \ZZ$. Let $\F$ be a foliation on $X$; then a general free line is not $\F$-invariant.
\end{corollary}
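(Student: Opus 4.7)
The plan is to deduce this directly from Proposition \ref{P:exist-non-invar-curves}. Since $X$ is covered by lines, $X$ is in particular uniruled and the set $X^{\rm free}$ of Lemma \ref{L:gen-free} is nonempty; any line through a point of $X^{\rm free}$ is automatically free. Hence there exists at least one free line $f \colon \p1 \to X$, and Proposition \ref{P:exist-non-invar-curves} applies to such an $f$.

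The scheme $\Mor(\p1,X)$ decomposes as a disjoint union of open subschemes indexed by the value of $\int_{\p1}f^*c_1(L)$, and the component parametrizing lines is the one where this integer equals $1$. Freeness is also an open condition (by the semicontinuity of $h^1(\p1, f^*\TX)$ and the fact that on $\p1$ a locally free sheaf is globally generated if and only if its $h^1$ with a twist is zero, or equivalently by openness of the locus where $f^*\TX$ splits with nonnegative summands). Consequently, a general deformation $f'$ of the given free line $f$ remains a free line.

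By Proposition \ref{P:exist-non-invar-curves}, a general deformation $f'$ of $f$ in $\Mor(\p1,X)$ is not $\F$-invariant. Since, as above, a general deformation of a free line is still a free line, we conclude that a general free line is not $\F$-invariant, which is exactly the claim.

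There is no real obstacle here: the statement is a direct specialization of the proposition to the particular case where the free morphisms are lines, using only that the locus of free lines is an open subset of $\Mor(\p1,X)$.
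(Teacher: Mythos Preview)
Your proof is correct and follows the same approach as the paper, which simply states that the corollary is an immediate consequence of Lemma~\ref{P:exist-non-invar-curves} (referenced as Proposition~\ref{P:exist-non-invar-curves}) without further details. Your write-up spells out the openness of the free-line locus more carefully than the paper does, but the argument is the intended one.
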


Suppose $X\subset \p{N}$ is a complete intersection given by equations of degrees $d_1, \dots, d_s$. If $d_1+\cdots + d_s \leq N-1$ then $X$ is covered by lines, see \cite[Proposition 2.13]{Deb-book}. Then, the previous corollary applies. 

\begin{proposition}\label{P:line-notFinvar}
Let $X\subset \p{N}$ be a smooth complex complete intersection of dimension at least three defined by equations of degrees $d_1, \dots, d_s$ such that $d_1+\cdots + d_s \leq N-1$. For any foliation $\F$ on $X$, a general line is not $\F$-invariant. 
\end{proposition}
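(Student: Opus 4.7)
The plan is to reduce directly to Corollary~\ref{C:line-not-invar}, which gives the conclusion once we verify its two hypotheses, namely that $\Pic(X) \cong \ZZ$ and that $X$ is covered by lines, and then make the (small) step from ``general free line'' to ``general line''.

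First I would establish that $\Pic(X) \cong \ZZ$. Since $X$ is a smooth complete intersection in $\p{N}$ of dimension $n := N-s \geq 3$, by Lemma~\ref{L:complete-intersection} we can write $X$ as an intersection of hypersurfaces cutting out a chain of smooth complete intersections $\p{N} = X_0 \supset X_1 \supset \dots \supset X_s = X$ of successively decreasing dimension. Applying the Lefschetz hyperplane theorem at each step (which requires $\dim X_i \geq 3$; this holds since $\dim X \geq 3$), one obtains $\Pic(X) \cong \Pic(\p{N}) \cong \ZZ$, generated by $\OO_X(1)$.

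Next I would verify that $X$ is covered by lines. This is exactly the classical degree condition $d_1 + \cdots + d_s \leq N-1$: by \cite[Proposition 2.13]{Deb-book} (already cited in the excerpt), through a general point of $X$ there passes a line of $\p{N}$ contained in $X$.

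Finally, to conclude, I would note that a general line on $X$ is free. Indeed, by Lemma~\ref{L:gen-free} the set $X^{\mathrm{free}} \subset X$ is a nonempty intersection of countably many dense open subsets, and any line meeting $X^{\mathrm{free}}$ is free; in particular, since $X$ is covered by lines, a line through a general point of $X$ passes through $X^{\mathrm{free}}$ and is therefore free. Fixing a foliation $\F$ on $X$, Corollary~\ref{C:line-not-invar} applies (both of its hypotheses having been verified) and yields that a general free line is not $\F$-invariant, which combined with the previous sentence proves that a general line is not $\F$-invariant. The argument is essentially a packaging of earlier results; the only mild subtlety is the passage from ``general free line'' to ``general line'', which is handled by Lemma~\ref{L:gen-free}.
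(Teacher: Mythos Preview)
Your proof is correct and follows essentially the same route as the paper: the paper simply observes (in the paragraph preceding the proposition) that the degree hypothesis guarantees $X$ is covered by lines by \cite[Proposition~2.13]{Deb-book} and then invokes Corollary~\ref{C:line-not-invar}, taking $\Pic(X)\cong\ZZ$ for granted from the earlier discussion after Lemma~\ref{L:complete-intersection}. Your write-up is just more explicit about verifying $\Pic(X)\cong\ZZ$ and about the passage from ``general free line'' to ``general line'' via Lemma~\ref{L:gen-free}.
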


\subsection{Foliations of degrees one and two} Following the discussion above, we will prove a structure theorem for foliations of small degrees on smooth projective varieties covered by lines. Then, we will apply it to smooth complete intersections and, in particular, to smooth hypersurfaces. The result below is inspired by the structure theorem for degree-three foliations on $\p3$ by da Costa, Lizarbe, and Pereira, see \cite[Theorem A]{CLP-2022}.



{
\begin{theorem}\label{T:general-cov-by-lines}
Let $X\subset \p{N}$ be a smooth projective variety covered by lines of dimension $n\geq 3$ with $\Pic(X)=\Z$.  If $\F$ is a codimension-one singular holomorphic foliation on $X$ of degree $d$ such that either 
\begin{itemize}
    \item $d=1$;
    \item $X$ is a quadric and $d\leq 2$; or
    \item $X = \p{n}$ and $d\leq 3$.
\end{itemize}
then, one of the following holds:
    \begin{enumerate}
        \item  $\F$ is defined by a closed rational 1-form without codimension-one zeros; or
        \item $\F$ is algebraically integrable; or
        \item $\F$ admits an algebraically integrable subfoliation of codimension 2, and degree $k$ such that $k \leq d-2$. 
    \end{enumerate}
\end{theorem}

\begin{proof}
   The hypotheses on $X$ and $d$ are equivalent to either $d=1$ or $c_1(TX) \geq n+d-2$. Indeed, for $d\geq 2$ this is a consequence of the Kobayashi-Ochiai Theorem \cite[Theorem 1.1 and Theorem 2.1]{KobOch}.
    By the adjunction formula, we have 
    \[
    c_1(\TF) = c_1(TX)-(d+2).
    \]
    Let $f \colon \p{1} \to \p{N}$ be the parameterization of a free line on $\p{N}$ contained in $X$. Due to Proposition \ref{P:line-notFinvar}, we can assume that $f$ is generically transverse to $\F$ and does not intersect $\sing(\F)$. Consider the decomposition $f^* \TF=\bigoplus_{i=1}^{n-1}\op{1}(a_i)$.
    Since $f$ is generically transverse to $\F$, $f^*T_\F$ injects into the normal bundle $N = f^*TX/\op1(2)$. By Lemma \ref{L:gen-free}, $N$ is globally generated so every summand is nonegative. On the other hand, $N\hookrightarrow f^*T\p{N}/\op1(2) \cong \op1(1)^{\oplus N-1}$; so that every summand is of degree at most one. Therefore, 
    \[
    f^*TX = \OO_{\p1}(2) \oplus \OO_{\p1}(1)^{a}\oplus \OO_{\p1}^{b}
    \]
    where $a=c_1(TX)-2\geq 0$ and $b=n-c_1(TX)+1\geq 0$; see, for instance,  \cite[IV 2.9]{Kollar-book}. It follows that every summand of the decomposition of $f^*\TF$ satisfies $a_i\leq 1$. From 
    \begin{equation}\label{E:Tgeneral}
        \sum_{i=1}^{n-1}a_i=c_1(TX)-(d+2) = a-d\geq -1,
    \end{equation}
    we have that $h^0(\p{1},f^*\TF)\neq 0$, hence $\F_{\rm tang}$ is not trivial. 
   %
 We claim that the following are the only possibilities for $f^*\TF$:
\begin{enumerate}
\item $f^*\TF$ is nonnegative, i.e., it is a direct sum of holomorphic line bundles of nonnegative degree.
\item
\[
f^*\TF\simeq \op{1}(1)^r\oplus
\op{1}^{\oplus n-2-r}\oplus
\op{1}(-c),
\]
for some nonnegative integer $r$, where $c=1$ or $2$.
\end{enumerate}
Indeed, if $d=1$, then $c_1(T\F)=a-1$ and $a_j\ge0$ for $j\le n-2$, while $a_{n-1}\ge-1$. Hence $f^*\TF$ is either nonnegative or has a unique negative summand, namely $\op{1}(-1)$. Assume now that $d\ge2$. Since $c_1(T\F)\ge n-4$ and $a_j\le1$, we must have $a_j=1$ for every $j\le n-4$. Consequently,
\[
a_{n-3}+a_{n-2}+a_{n-1}\ge0.
\]
Using the ordering
\[
a_1\ge\cdots\ge a_{n-1}
\]
together with the above inequality, one readily checks that the only remaining possibilities are those listed above.

    Suppose $\F_{\rm tang}$ is not algebraically integrable and that $\F$ is not algebraically integrable. Since $\Pic(X)\cong \ZZ$, every effective divisor is ample; hence it intersects every curve. Since the line is chosen generically and $X$ has Picard number one, this line intersects non-trivially and transversely each of the finitely many algebraic hypersurfaces invariant by $\F$; hence we can apply \cite[Theorem C]{LPT-def} to show that $\F$ is defined by a closed rational 1-form without codimension-one zeros. This gives our first item. 
    
    Suppose that $\F_{\rm tang}$ is algebraically integrable. By \cite[Proposition 4.10]{LPT-def}, there exists an algebraically integrable subfoliation $\G \subset \F$ such that $H^0(f^*T_\G) = H^0(f^*T_\F)$. Write $f^*T_\G =\bigoplus_{i=1}^{n-1}\op{1}(b_i)$. By Remark \ref{R:subfol}, there is $m >0$ such that $b_i = a_i$ for $i \leq m$ and for $i>m$ both $b_j<0$ and $a_j<0$. It follows that either $m = n-1$ and $\G = \F$ is algebraically integrable, or $f^*\TF\simeq \op{1}(1)^{r}\oplus \op{1}^{\oplus n-2-r}\oplus\op{1}(-c)$ and $f^*\TG=\op{1}(1)^{r}\oplus \op{1}^{\oplus n-2-r}$. By the adjunction formula for  $\F$ and $\G$ we have
    \[
    \mbox{ $c_1(\TF)=r-c=c_1(TX)-(d+2)$, and 
    $c_1(\TG) =r= c_1(TX)-(k+3)$.} 
    \]
    Combining and simplifying, we get $k=d-1-c$. This concludes the proof.   
\end{proof}
}

The previous theorem allows us to determine the structure of degree-one foliations on smooth complete intersections.

\begin{corollary}\label{C:deg one}
Let $X$ be a smooth complete intersection on $\p{N}$ of dimension $n\geq3$. Suppose $c_1(TX)\geq 2$. 
 If $\F$ is a codimension-one foliation on $X$ of degree one, then
    \begin{enumerate}
         \item  $\F$ is defined by a closed rational 1-form without codimension-one zeros; or 
          \item $\F$ admits a subfoliation of codimension 2, and degree zero.       
    \end{enumerate}  
\end{corollary}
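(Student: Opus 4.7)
The plan is to invoke Theorem~\ref{T:general-cov-by-lines} with $d=1$ and then rule out or reshape each of its three alternatives using the facts collected earlier in the paper.

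First, I would check the hypotheses of Theorem~\ref{T:general-cov-by-lines}. Writing $X$ as a smooth complete intersection of type $(d_1,\ldots,d_s)$ in $\mathbb{P}^{N}$, the adjunction formula gives $c_1(TX)=N+1-\sum d_j$, so the assumption $c_1(TX)\ge 2$ is equivalent to $\sum d_j \le N-1$. By the cited \cite[Proposition~2.13]{Deb-book} (used already in Proposition~\ref{P:line-notFinvar}), this precisely guarantees that $X$ is covered by lines. Because $n=\dim X\ge 3$, the Lefschetz hyperplane theorem gives $\Pic(X)\cong\mathbb{Z}$. Finally, the hypothesis of Theorem~\ref{T:general-cov-by-lines} is satisfied via the clause ``$d=1$''.

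Next, I would carry out the trichotomy coming from Theorem~\ref{T:general-cov-by-lines}. If $\F$ is defined by a closed rational $1$-form without codimension-one zeros, conclusion (1) of the corollary holds directly. If $\F$ admits an algebraically integrable codimension-two subfoliation of degree $k\le d-2=-1$, then this subfoliation is a foliation on the smooth complete intersection $X$ and must therefore have nonnegative degree by Remark~\ref{deg-posit}; hence this alternative is vacuous. It remains to handle the case where $\F$ itself is algebraically integrable. In that case I apply Corollary~\ref{C:rat-int-sci}: either $\F$ is defined by a rational $1$-form without codimension-one zeros — and inspecting the proof of that corollary, the form constructed is of logarithmic type $\rho^{\ast}(dx/x-dy/y)$, hence closed, so we are again in conclusion (1) — or $\F$ admits a codimension-two subfoliation $\G$ of degree $k\le d-1=0$. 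Once more, Remark~\ref{deg-posit} forces $k\ge 0$, so $k=0$, yielding conclusion (2).

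The argument is essentially a combination of results already in place, and the only subtle point is to make sure that ``rational $1$-form without codimension-one zeros'' in Corollary~\ref{C:rat-int-sci} is upgraded to ``closed rational $1$-form without codimension-one zeros'', which I would verify by quoting the explicit construction inside the proof of Corollary~\ref{C:rat-int-sci}. I do not expect any serious obstacle: the real content is contained in Theorem~\ref{T:general-cov-by-lines}, and the role of the hypothesis $c_1(TX)\ge 2$ is twofold — it ensures that $X$ is covered by lines so the theorem applies, and combined with $d=1$ it collapses the ``$k\le d-2$'' case to the empty range, forcing the degree of any surviving subfoliation to equal zero.
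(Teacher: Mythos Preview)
Your proposal is correct and follows essentially the same approach as the paper's proof: verify that $X$ is covered by lines, invoke Theorem~\ref{T:general-cov-by-lines} with $d=1$, eliminate the negative-degree subfoliation alternative via Remark~\ref{deg-posit}, and feed the algebraically integrable case into Corollary~\ref{C:rat-int-sci}. Your treatment is in fact slightly more careful than the paper's, since you explicitly justify why the rational $1$-form produced by Corollary~\ref{C:rat-int-sci} is \emph{closed} (the paper silently assumes this when it writes ``Suppose $\F$ is not given by a closed rational 1-form\ldots'').
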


\begin{proof}

Let $X \subset \p{N}$ be a smooth complete intersection defined by equations of degrees 
$d_1,\dots,d_s$. Since, by hypothesis, $c_1(TX) \ge 2$, we have 
$d_1 + \cdots + d_s \le N-1$ and then $X$ is covered by lines, see \cite[Proposition 2.13]{Deb-book}. 
By Theorem \ref{T:general-cov-by-lines}, either $\F$ is defined by a closed rational 1-form without codimension-one zeros, is algebraically integrable, or admits a codimension-two subfoliation of negative degree. The latter is not possible, as observed in Remark \ref{deg-posit}. Assume now that $\F$ is not given by a closed rational 1-form without codimension-one zeros. Then, by Corollary \ref{C:rat-int-sci}, $\F$ must admit a codimension-two subfoliation of degree zero.
\end{proof}

\begin{corollary}\label{C:new deg_two_quadric}
Let $\F$ be a codimension-one holomorphic foliation of degree two on a smooth quadric $\qn \subset \p{n+1}$, $n\geq 4$. Then
\begin{enumerate}
        \item  $\F$ is defined by a closed rational 1-form without codimension-one zeros; or
         \item $\F$ is a linear pull-back of a degree 2 foliation on $\p2$; or
          \item $\F$ admits a subfoliation of codimension 2, and degree one.    
\end{enumerate}
Moreover, foliations in item (3) are algebraically integrable.
\end{corollary}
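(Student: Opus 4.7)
The plan is to apply Theorem~\ref{T:general-cov-by-lines} directly. For $X = \qn \subset \p{n+1}$ we have $\Pic(X) \simeq \Z$, $X$ is covered by lines, and $c_1(TX) = n = n + d - 2$ when $d = 2$, so the hypothesis holds (with equality) for $n \geq 4$. The theorem yields three alternatives: either $\F$ is defined by a closed rational $1$-form without codimension-one zeros (matching item (1)); or $\F$ is algebraically integrable; or $\F$ admits an algebraically integrable codimension-two subfoliation $\G$ of degree $k \leq d-2 = 0$. Remark~\ref{deg-posit} applied to the hypersurface $\qn$ forces $k = 0$ in the last case.

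Next I would refine the algebraically integrable case using Corollary~\ref{C:rat-int-sci}: either $\F$ is again a closed rational $1$-form (item (1)) or $\F$ admits a codimension-two subfoliation of degree $k \leq 1$; Remark~\ref{deg-posit} restricts this to $k \in \{0,1\}$. If $k = 1$, this is precisely item (3); since this possibility arises only via Corollary~\ref{C:rat-int-sci}, $\F$ is itself algebraically integrable in that case, proving the ``moreover'' assertion.

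It then remains to show that a codimension-two subfoliation $\G \subset \F$ of degree zero forces $\F$ to be a linear pullback of a degree-two foliation on $\p{2}$ (item (2)). Since $n \geq 4$, Corollary~\ref{C:Ext2codim} extends $\G$ to a codimension-two foliation $\widetilde{\G}$ of degree zero on $\p{n+1}$. By the classification in the proof of Proposition~\ref{P:deg0-integrable}, up to a linear change of coordinates $\widetilde{\G}$ is defined by the $2$-form $i_\rad(dx_0 \wedge dx_1 \wedge dx_2)$; its leaves are the fibers of the linear projection $\widetilde{\pi} \colon \p{n+1} \dashrightarrow \p{2}$, $[x_0:\cdots:x_{n+1}] \mapsto [x_0:x_1:x_2]$. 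Restricting to $\qn$ produces a dominant rational map $\pi \colon \qn \dashrightarrow \p{2}$ whose general fibers are smooth $(n-2)$-dimensional quadric sections, coinciding with the general leaves of $\G$.

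Finally, because $\G \subset \F$ as saturated subsheaves of $T\qn$, the leaves of $\F$ are unions of fibers of $\pi$, so $\F$ descends to a codimension-one foliation $\Ho$ on $\p{2}$ with $\F = \pi^*\Ho$. Comparing normal bundles via $\pi^*\op{2}(1) = \oqn(1)$ forces $\nH \simeq \op{2}(4)$, giving $\deg(\Ho) = 2$. I expect the main obstacle to be this descent step: verifying globally (not merely on a Zariski open subset of $\qn$) that the quotient $\F/\G$ genuinely comes from a saturated subsheaf of $T\p{2}$ under $\pi$, which requires controlling the behavior along the indeterminacy locus of $\pi$ (a codimension-three subset of $\qn$) and the locus where fibers of $\pi$ degenerate.
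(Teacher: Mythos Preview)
Your approach is essentially the same as the paper's: apply Theorem~\ref{T:general-cov-by-lines}, refine the algebraically integrable case via Corollary~\ref{C:rat-int-sci}, and for the degree-zero subfoliation the paper cites \cite[Proposition~3.18]{AD-fano2} directly (which is precisely what Corollary~\ref{C:Ext2codim} invokes for quadrics) to identify $\G$ with a linear projection to $\p2$. The descent step you flag as the main obstacle is handled in the paper's parallel argument (Theorem~\ref{T:degreoneSCI}) via \cite[Lemma~3.1]{CLLPT7} for the existence of $\Ho$ and Lemma~\ref{L:crit-proj} for the normal-bundle comparison; in the proof of the present corollary the paper simply asserts the conclusion.
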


\begin{proof}
Suppose $\F$ is not defined by a closed rational 1-form without codimension-one zeros. Combining Theorem \ref{T:general-cov-by-lines} and Corollary \ref{C:rat-int-sci}, we have that $\F$ admits a subfoliation $\G$ of codimension two and degree $k\leq 1$. In case $k=0$, by \cite[Proposition 3.18]{AD-fano2}, $\G$ must be given by a linear projection to $\p2$, hence $\F$ is a linear pullback of a foliation of degree two on $\p2$.
\end{proof}

\begin{theorem}\label{T:degreotwo_3quadric}
Let $\F$ be a codimension-one holomorphic foliation of degree two on a smooth tridimensional quadric $\q{3}$. Then
    \begin{enumerate}
        \item  $\F$ is defined by a closed rational 1-form without codimension-one zeros; or
          \item $\F$ is tangent to a foliation by curves of degree one; or
          \item there exists an algebraic action of $\C$ or $\C^*$ tangent to $\F$ such that when this action has only isolated fixed points, we have $\TF = \mc O_{\q{3}} \oplus \mc O_{\q{3}}(-1)$.
    \end{enumerate}
Moreover, foliations in item (2) are algebraically integrable.
\end{theorem}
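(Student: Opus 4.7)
The strategy is to apply the structure Theorem \ref{T:general-cov-by-lines} combined with Corollary \ref{C:rat-int-sci}, and then analyze the resulting subfoliations. The hypotheses of Theorem \ref{T:general-cov-by-lines} are satisfied on $\q{3}$: it is covered by lines, $\Pic(\q{3}) \cong \Z$, and $c_1(T\q{3}) = 3 = n + d - 2$ with $n=3$ and $d=2$. The theorem then yields three alternatives: (a) $\F$ is defined by a closed rational $1$-form without codimension-one zeros, which is case (1); (b) $\F$ is algebraically integrable; or (c) $\F$ admits an algebraically integrable codimension-two subfoliation $\G$ of degree $k \leq d-2 = 0$. In alternative (b), Corollary \ref{C:rat-int-sci} gives either case (1) or a codimension-two subfoliation $\G$ of degree $k \leq d-1 = 1$ that, by the construction in the proof of Proposition \ref{P:rat-int-or-sub}, can be taken algebraically integrable. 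Outside of case (1), we therefore obtain an algebraically integrable codimension-two subfoliation $\G \subset \F$ with $\deg \G \in \{0,1\}$.

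If $\deg \G = 1$, then $\G$ is a degree-one foliation by curves tangent to $\F$; this is case (2) and its algebraic integrability yields the \emph{moreover} clause. If $\deg \G = 0$, then by adjunction $\TG \otimes \det \nG = \det T\q{3}$ together with $\det \nG = \oq{3}(3)$ forces $\TG \cong \oq{3}$, so $\G$ is generated by a nonzero global vector field $v \in H^0(\q{3}, T\q{3}) \cong \so(5,\C)$. Depending on the Jordan decomposition of $v$, the flow of $v$ integrates to an algebraic action of $\C$ or $\C^*$ on $\q{3}$; as $v \in H^0(\TF)$, this action is tangent to $\F$.

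It remains to prove $\TF \cong \oq{3} \oplus \oq{3}(-1)$ under the assumption that the action has only isolated fixed points, i.e., $v$ has isolated zeros. A Chern-class computation gives $\det \TF = \det T\q{3} \otimes \nF^{-1} = \oq{3}(-1)$. Since $v$ has no codimension-one vanishing, the inclusion $\oq{3} \hookrightarrow \TF$ induced by $v$ is saturated, producing a short exact sequence
\[
0 \to \oq{3} \to \TF \to \cali_Z(-1) \to 0,
\]
where $Z$ is the zero-dimensional zero scheme of $v$. The plan is first to show $Z = \emptyset$ by exploiting the reflexivity of $\TF$ (as a saturated subsheaf of the locally free sheaf $T\q{3}$) through a local $\mathscr{E}xt$-sheaf analysis; this turns $\cali_Z(-1)$ into the line bundle $\oq{3}(-1)$. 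Then the vanishing
\[
\mathrm{Ext}^1\bigl(\oq{3}(-1),\oq{3}\bigr) = H^1\bigl(\q{3},\oq{3}(1)\bigr) = 0
\]
splits the extension, yielding $\TF \cong \oq{3} \oplus \oq{3}(-1)$.

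The main obstacle is this last splitting argument, in particular ruling out the possibility that $\TF$ is a non-locally-free reflexive rank-two sheaf with a non-empty degeneracy locus $Z$ of $v$; this requires a careful interplay between the singular scheme of $\F$, the zero scheme of $v$, and the depth properties of reflexive rank-two sheaves on the three-dimensional quadric.
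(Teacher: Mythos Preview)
Your overall strategy matches the paper's: apply Theorem~\ref{T:general-cov-by-lines} and Corollary~\ref{C:rat-int-sci} to obtain, outside case~(1), a one-dimensional subfoliation $\G\subset\F$ of degree $0$ or $1$, and then analyze each case. However, the passage from the global vector field $v$ (in the case $\deg\G = 0$, so $\TG\cong\oq{3}$) to an algebraic $\C$ or $\C^*$ action tangent to $\F$ has a genuine gap. The one-parameter group $\{\exp(tv):t\in\C\}\subset\Aut(\q{3})$ need not be Zariski closed: if $v \in \so(5,\C)$ is semisimple with rationally incommensurable eigenvalues, this group is dense in a higher-dimensional torus. Passing to the Jordan parts $v_s,v_n$ does not help directly, since there is no a priori reason for $v_s$ or $v_n$ to lie in $H^0(\q{3},\TF)$. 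The paper resolves this via \cite[Lemma~4.3]{LPT13}: with $\fix(\F) = H^0(\q{3},\TF)$ and $\aut(\F) = \{w : \mathcal{L}_w\omega \wedge \omega = 0\}$, either $\fix(\F) \subsetneq \aut(\F)$, in which case $\F$ falls back into case~(1), or $\fix(\F) = \aut(\F)$; in the latter case $\fix(\F)$ is the Lie algebra of an algebraic subgroup of $\Aut(\q{3})$ (namely the stabilizer of $[\omega]$), and one extracts from it an algebraic one-parameter subgroup tangent to $\F$. Your Jordan-decomposition shortcut bypasses precisely this dichotomy.

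For the splitting $\TF \cong \oq{3} \oplus \oq{3}(-1)$ the paper simply invokes \cite[Lemma~4.2]{LPT13}. Your proposed route would need $\TF$ to be locally free, not merely reflexive, to force $\mathscr{E}xt^2(\TF,\oq{3})=0$ and hence $Z=\emptyset$; you correctly flag this as unresolved. One minor correction: the ``moreover'' clause asserts that $\F$ (not $\G$) is algebraically integrable in case~(2). The justification is implicit in your own case split: a subfoliation of degree exactly $1$ can only appear through Corollary~\ref{C:rat-int-sci}, which was invoked under the standing hypothesis that $\F$ itself is algebraically integrable, since Theorem~\ref{T:general-cov-by-lines} alone yields only degree $\leq 0$.
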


\begin{proof}
The proof is similar to the proof of Corollary \ref{C:new deg_two_quadric}. Except when the subfoliation $\G$ has dimension one, and \cite[Proposition 3.18]{AD-fano2} does not apply. Instead, $\TG = \oq3$ and $\G$ is defined by a global vector field on $\q3$ without zeros in codimension one.

Let $\omega\in H^0(\q3 , \Omega_{\q3}^1({4}))$ be a 1-form defining $\F$ and define 
\[
\aut(\F) = \{ v\in H^0(\q3, T\q3) \mid \mathcal{L}_v\omega \wedge \omega = 0 \},
\]
the Lie algebra of infinitesimal automorphisms of $\F$, here $\mathcal{L}_v\omega$ is the Lie derivative. Moreover, define $\fix(\F) = H^0(\q3, T\F) \subset \aut(\F)$. Note that $\G$ is defined by a vector field $v\in \fix(\F)$.

By \cite[Lemma 4.3]{LPT13}, either $\fix(\F) \neq \aut(\F)$ and $\F$ is defined by a closed rational 1-form without zeros in codimension one; or $\fix(\F) = \aut(\F)$ and there exists a non-trivial algebraic action of a one-dimensional Lie group tangent to $\F$; on $\q3$ this group must be $\C$ or $\C^*$. 
Suppose we are in the latter case, and let $\G$  denote the foliation defined by this algebraic action. Finally, if $\G$ has only zero-dimensional singularities, then \cite[Lemma 4.2]{LPT13} implies that $\TF = \oq{3} \oplus \oq{3}(-1)$.
\end{proof}

\section{Irreducible components of
degree-one foliations}\label{Sec-deg1comp}

In this section, we apply the structure Theorem \ref{T:general-cov-by-lines} and its corollaries to determine the irreducible components of $\Folx{X}{1}{1}$ for $X$ either a hypersurface or a smooth complete intersection of hypersurfaces of degrees at least three, improving Corollary \ref{C:ext-comp-int-deg>4}. We start with a technical result.

\begin{lemma}\label{L:crit-proj}
    Let $\pi \colon \p{n+s}\dasharrow \p{q}$ be a linear projection, with $n>3$ and $q<n$, and let $X\subset \p{n +s}$ be a smooth complete intersection {such that $\restr{\pi}{X}$ is a dominant rational map. Then 
    the set of its critical points has codimension greater than one.}
\end{lemma}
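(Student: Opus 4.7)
The plan is to choose coordinates so that $\pi$ takes the standard form $\pi([x_0:\cdots:x_{n+s}])=[x_0:\cdots:x_q]$, making the center $L=V(x_0,\ldots,x_q)$ a linear subspace of dimension $n+s-q-1$. For each $y\in \p{q}$, I will write $\Lambda_y\supset L$ for the $(n+s-q)$-dimensional linear subspace which is the closure of $\pi^{-1}(y)$, and I will write $X=V(f_1,\ldots,f_s)$ with the $f_i$ forming a regular sequence. Both assertions should reduce to dimension counts adapted to this set-up.

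For \emph{surjectivity}, I would first apply Krull's principal-ideal theorem to obtain $\dim(X\cap \Lambda_y)\geq n-q\geq 1$ for every $y$, and $\dim(X\cap L)\leq n+s-q-1$. Next I would argue that $X\not\subset L$ --- automatic when $q\geq s$ by dimension reasons, and for $q<s$ derivable from the non-degeneracy of a smooth complete intersection in $\p{n+s}$ (possibly after replacing $\p{n+s}$ by the linear span of $X$). Since the family $\{\Lambda_y\}_{y\in \p{q}}$ sweeps out $\p{n+s}$, the inclusion $X\cap \Lambda_y\subset L$ cannot hold for every $y$; combined with the strict inequality $\dim(X\cap \Lambda_y)\geq n-q>n-q-1=\dim(X\cap L)$ (the latter being the expected codimension of the proper intersection of $X$ with $L$) this forces $X\cap \Lambda_y\not\subset L$ for every $y$, yielding $\pi|_X^{-1}(y)=(X\cap \Lambda_y)\setminus L\neq\emptyset$.

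For the \emph{critical locus}, the plan is to identify it with a determinantal locus. At $x\in X\setminus L$ the kernel of $d\pi_x$ is the fixed subspace $K=\mathrm{span}(\partial_{x_{q+1}},\ldots,\partial_{x_{n+s}})$, so $d(\pi|_X)_x$ fails to be surjective precisely when the Jacobian sub-matrix
\[
J(x)=\left(\tfrac{\partial f_i}{\partial x_j}(x)\right)_{1\leq i\leq s,\; q+1\leq j\leq n+s}
\]
of size $s\times(n+s-q)$ has rank strictly less than $s$. The standard determinantal codimension estimate then gives that this rank-drop locus has codimension at most $n-q+1\geq 2$ in $\p{n+s}$, and hence codimension at least two inside $X$ after intersecting. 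Adjoining the indeterminacy $X\cap L$, whose codimension in $X$ is $q+1\geq 2$ by the proper-intersection argument above, I would conclude that the full critical set has codimension at least two in $X$.

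The hard part will be verifying that the determinantal codimension estimate is actually sharp on $X$ for \emph{every} projection, not only for a generic one: the rank-drop locus could a priori acquire excess dimension along a subvariety of $X$. My approach here would be to view $d(\pi|_X)\colon TX\to \pi^*T\p{q}$ as a morphism of vector bundles on $X$ and apply the Thom--Porteous formula, using the smoothness of $X$ together with the linear independence of $df_1,\ldots,df_s$ along $X$ to rule out excess-intersection pathologies.
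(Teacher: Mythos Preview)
Your determinantal approach to the critical locus has a genuine gap. The standard estimate says the codimension of a degeneracy locus is \emph{at most} the expected value $n-q+1$; it gives no lower bound, so knowing $n-q+1\geq 2$ does not prevent the rank-drop locus from containing a divisor. You flag this correctly as the hard part, but the Thom--Porteous remedy cannot work: that formula computes the cycle class of the degeneracy locus \emph{under the hypothesis} that it already has the expected codimension, and says nothing about the actual dimension when there is excess. Smoothness of $X$ and linear independence of $df_1,\dots,df_s$ only force the full $s\times(n+s+1)$ Jacobian to have rank $s$ along $X$; they impose no constraint on your submatrix $J(x)$. (A secondary issue: your surjectivity argument and your bound on $\codim_X(X\cap L)$ both assume $X$ meets $L$ properly, which can fail --- e.g.\ when $L$ is one of the linear $\lfloor n/2\rfloor$-planes contained in a smooth quadric $\q{n}$.)

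The paper bypasses all of this with a cohomological argument. Pull back a nowhere-vanishing $\Theta\in H^0(\p{q},\Omega^q_{\p{q}}(q+1))$ to obtain a global section $(\pi|_X)^*\Theta\in H^0(X,\Omega^q_X(q+1))$ whose zero set is exactly the critical locus (the section extends across $X\cap L$ via the ambient $\p{n+s}$, so the indeterminacy locus is absorbed automatically). If this zero set had a divisorial component, dividing it out would produce a nonzero element of $H^0(X,\Omega^q_X(r))$ with $r\leq q$; but this group vanishes for a smooth complete intersection with $q<n$ (this is the vanishing behind Remark~\ref{deg-posit}). That Bott-type vanishing is the missing ingredient, and I see no way to replace it within a purely determinantal or Thom--Porteous framework.
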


\begin{proof}
     Let $\Theta \in H^0(\p{q}, \Omega^q_{\p{q}}(q+1))$ be a $q$-twisted form in $\p{q}$ without zeros; this is possible since $\Omega^q_{\p{q}}(q+1) \cong \op{q}$. The set of critical points of $\pi|_{X}$ coincides with the zeros of $(\pi|_{X})^*\Theta$ which is a section of $H ^0(X, \Omega^q_{X}(q+1))$. As $q < n$, we have that this section cannot vanish in codimension one because, in this case, we would produce a non-zero section in $H^0(X, \Omega^q_{X}(r) )$, with $r\leq q$, but such a group is trivial.
\end{proof}

\begin{theorem}\label{T:degreoneSCI}
Let $X$ be a smooth complete intersection in $\p{n+s}$ of type $(d_1,\ldots,d_s)$, with $n\geq 4$ and $c_1(TX)\geq 2$. If either $s=1$ ($X$ is a hypersurface) or $d_j \geq 3$ for $j=1,\dots,s$, then the $\Folx{X}{1}{1}$ has two irreducible components:
\[
\Folx{X}{1}{1} = \Log(X;1,2) \cup \Log(X;1,1,1).
\]
\end{theorem}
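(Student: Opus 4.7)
The plan is to combine the dichotomy of Corollary~\ref{C:deg one} with the pullback structure of degree-zero codimension-two foliations to funnel every $\F\in\Folx{X}{}{1}$ into $\Log(X;1,2)\cup\Log(X;1,1,1)$. That both spaces are irreducible components follows from the Calvo--Andrade machinery of Section~\ref{S:components}, since the only partitions of $d+2=3$ into at least two positive integers are $(1,2)$ and $(1,1,1)$; the leftover partition $(3)$ would require a single logarithmic summand, but the corresponding twisted form $\lambda\, df$ has non-vanishing radial contraction and so does not define a codimension-one foliation. The content is therefore the reverse inclusion, which I would establish by taking any $\F\in\Folx{X}{}{1}$ and analysing the two alternatives offered by Corollary~\ref{C:deg one}:
\begin{itemize}
\item[(I)] $\F$ is defined by a closed rational 1-form $\eta$ without codimension-one zeros;
\item[(II)] $\F$ admits a codimension-two subfoliation $\G$ of degree zero.
\end{itemize}

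In Case (II), Theorem~\ref{T:Ext2codimgeneral} (when every $d_j\geq 3$) or Corollary~\ref{C:Ext2codim} (when $s=1$) extends $\G$ to a codimension-two degree-zero foliation $\widetilde\G$ on $\p{n+s}$. Proposition~\ref{P:deg0-integrable} identifies $\widetilde\G$, up to a linear change of coordinates, with the foliation by fibres of a linear projection $\tilde\pi\colon\p{n+s}\dashrightarrow\p2$; by Lemma~\ref{L:crit-proj}, the restriction $\pi:=\tilde\pi|_X$ is dominant with critical locus of codimension at least two. Since $T\G\subset T\F$, the leaves of $\F$ are unions of fibres of $\pi$, so $\F=\pi^*\H$ for a codimension-one foliation $\H$ on $\p2$ of degree one. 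The identification $\LPB(1)=\Log(\p{n+s};1,1,1)$ from Section~\ref{S:components} then puts the ambient pullback of $\H$ inside $\Log(\p{n+s};1,1,1)$, and restricting to $X$ gives $\F\in\Log(X;1,1,1)$.

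In Case (I), the Lefschetz hyperplane theorem supplies $H^1(X,\mathbb{C})=0$, so Deligne's classification of closed rational 1-forms on simply-connected manifolds writes $\eta=dh+\sum_{i=1}^k \lambda_i\,df_i/f_i$ for a rational function $h=P/Q$ and irreducible homogeneous sections $f_i$. Comparing the total polar degree of $\eta$ against $d+2=3$ gives $2\deg(Q)+\sum_i\deg(f_i)=3$. A parity argument leaves only two configurations: either $\deg(Q)=0$, so $h$ is constant and $\eta$ is purely logarithmic of type $(1,2)$ or $(1,1,1)$, placing $\F$ directly into $\Log(X;1,2)$ or $\Log(X;1,1,1)$; or $\deg(Q)=1$ with a single logarithmic summand $\lambda\,df_1/f_1$ of degree one, in which case the linear forms $P,Q,f_1$ determine a linear projection $X\dashrightarrow\p2$ whose fibres form a codimension-two degree-zero subfoliation of $\F$, returning us to Case~(II).

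The most delicate step will be the pole-budget analysis in Case (I): making sure Deligne's decomposition applies cleanly to the projective setting on $X$, ruling out spurious higher-order-pole configurations, and checking that in the second sub-configuration the three linear forms $P,Q,f_1$ are generically independent so that the resulting subfoliation really has codimension two. Once this is secured, both branches route into $\Log(X;1,2)\cup\Log(X;1,1,1)$, completing the theorem.
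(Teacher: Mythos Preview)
Your overall architecture matches the paper's proof exactly: invoke Corollary~\ref{C:deg one}, and in the subfoliation branch use Theorem~\ref{T:Ext2codimgeneral}/Corollary~\ref{C:Ext2codim} to extend $\G$, identify it with the fibres of a linear projection, and recognise $\F$ as the pullback of a degree-one foliation on $\p2$. The paper invokes \cite[Lemma~3.1]{CLLPT7} to produce $\Ho$ and uses Lemma~\ref{L:crit-proj} explicitly to match normal bundles and read off $\deg\Ho=1$; you should make that degree computation explicit rather than asserting it.

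The substantive divergence is in Case~(I). The paper does not redo the residue analysis: it simply cites \cite[Lemma~2.5]{LPT13}, which already says that a codimension-one foliation on such $X$ defined by a closed rational $1$-form without codimension-one zeros and with $c_1(\nF)=3$ lies in $\Log(X;1,2)\cup\Log(X;1,1,1)$. Your hand-rolled version has a genuine gap. On the projective variety $X$ a single term $\lambda\,df_1/f_1$ with $f_1\in H^0(X,\OX(1))$ is \emph{not} a well-defined rational $1$-form; only combinations with $\sum_i\lambda_i d_i=0$ descend from the affine cone (equivalently, $i_{\rad}$ of the lift must vanish). So your second sub-configuration ``$\deg Q=1$ with a single logarithmic summand'' cannot occur as written: imposing the descent condition forces $\lambda=0$, and then $\eta=d(P/Q)$ has polar degree $2$, not $3$. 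The actual non-logarithmic possibilities are a pole of order $2$ along a hyperplane together with a simple pole along another hyperplane (residues $\lambda,-\lambda$), or a triple pole along a single hyperplane; in both cases one checks directly that $\eta$ is a pullback from $\p2$ via three linear forms (respectively $P,f_1,f_2$ or $P,f_1$ with $\deg P=2$), landing in $\Log(X;1,1,1)$ or $\Log(X;1,2)$. Your instinct to route the leftover case through a linear projection is therefore right, but the bookkeeping that gets you there---the pole-order stratification and the residue constraint $\sum\lambda_i d_i=0$---needs to be redone correctly, or else replaced by the citation the paper uses.
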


\begin{proof}
Let $\F$ be a codimension-one foliation of degree one on $X$. By Corollary \ref{C:deg one}, we have that $\F$ is either defined by a closed rational 1-form without codimension-one zeros or $\F$ admits a subfoliation of codimension 2, and degree zero. In the first case,   $\F$ belongs to $\Log(X;1,2)$ or $\Log(X;1,1,1)$, see \cite[Lemma 2.5]{LPT13}.

Suppose there exists a subfoliation $\G \subset \F$ of codimension two and degree zero. By Theorem \ref{T:Ext2codimgeneral}, and Corollary \ref{C:Ext2codim} in the case of hypersurfaces, we have that $\G$ extends to $\p{n+s}$ and thus $\G$ is defined by the restriction to $X$ of a linear projection $\pi \colon \p{n+s} \dashrightarrow \p{2}$. By \cite[Lemma 3.1]{CLLPT7}, there exists a foliation $\Ho$ on $\p{2}$ such that $\F = (\restr{\pi}{X})^* \Ho$. Furthermore, Lemma \ref{L:crit-proj} guarantees that $\nF = (\restr{\pi}{X})^*\nH$. Therefore, $\Ho$ is a codimension-one foliation of degree one on $\p{2}$. The classification of foliations of degree one on $\p{2}$ {\cite[pp.8--19]{JP-Jouanoulou79}} implies that  $\F$ belongs to either $\Log(X;1,2)$ or $\Log(X;1,1,1)$.   
\end{proof}

{
Therefore, Theorem~\ref{T: Main components}, under the assumption that $c_1(TX)\geq 2$, follows from Theorem~\ref{T:degreoneSCI} and Corollary~\ref{C:ext-comp-int-deg>4}. Note that if $X\subset \mathbb{P}^{n+1}$ is a hypersurface, then
\[
c_1(TX)=n+2-\deg(X).
\]
Hence, Theorem~\ref{T:degreoneSCI} applies whenever $\deg(X)\leq n$. Combining Corollary~\ref{C:ext-comp-int-deg>4} (see \cite[Corolário~2.2.7]{TeseMateus}) with Theorem~\ref{T:degreoneSCI} yields the following corollary.}

{
\begin{corollary}\label{C:degreonecomp}
Let $X$ be a smooth hypersurface of $\p{n+1}$, such that $X$ is neither a cubic threefold nor a three-dimensional quadric. Then irreducible components of the space of codimension-one foliations of degree one on $X$ are $\Log(X; 1,2)$ and $\Log(X;1,1,1)$.   
\end{corollary}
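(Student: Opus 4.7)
The plan is to derive this corollary as an immediate consequence of Theorem \ref{T:degreoneSCI} combined with the extension result \cite[Theorem 3.6]{Fig23}, handling two overlapping regimes that together cover all cases in the hypothesis.

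First, I would split into two cases. In the \emph{low-degree case}, assume $n \geq 4$ and $\deg(X) \leq n$. Since $X$ is a hypersurface, the adjunction formula gives $c_1(TX) = n + 2 - \deg(X) \geq 2$, so Theorem \ref{T:degreoneSCI} applies with $s = 1$ and yields $\Folx{X}{}{1} = \Log(X; 1, 2) \cup \Log(X; 1, 1, 1)$ directly.

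Next, in the \emph{high-degree case} $\deg(X) \geq 4$ and $n \geq 3$, I would invoke \cite[Theorem 3.6]{Fig23}, which establishes that the restriction map $\Folx{\p{n+1}}{}{1} \to \Folx{X}{}{1}$ is an isomorphism (using that the bound $\deg(X) > 2 \deg(\F) + 1 = 3$ is satisfied). By Jouanoulou's classification, $\Folx{\p{n+1}}{}{1} = \Log(\p{n+1}; 1, 2) \cup \Log(\p{n+1}; 1, 1, 1)$. Via the commutative diagram \eqref{E:logmap} and the surjectivity of $\rest_0$ on homogeneous polynomials, these two components restrict onto $\Log(X; 1, 2)$ and $\Log(X; 1, 1, 1)$, and the isomorphism prevents any collapse or appearance of spurious components.

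Finally, I would verify that the two cases together exhaust the hypothesis: if $n \geq 4$, then every $\deg(X)$ satisfies either $\deg(X) \leq n$ (low-degree case) or $\deg(X) \geq n+1 \geq 5 \geq 4$ (high-degree case); and the remaining regime $n = 3$ with $\deg(X) \geq 4$ is covered by the high-degree case. No substantial obstacle arises, since the structure theorem already does the heavy lifting in the low-degree regime while the ambient classification on $\p{n+1}$ handles the high-degree regime; the only minor point requiring attention is confirming that the restriction map transports each logarithmic component of $\p{n+1}$ onto the correspondingly labeled logarithmic component of $X$, which follows from diagram \eqref{E:logmap} together with the isomorphism provided by \cite[Theorem 3.6]{Fig23}.
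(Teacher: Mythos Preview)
Your proposal is correct and follows essentially the same approach as the paper: split into the low-degree regime ($n\geq 4$, $\deg(X)\leq n$) handled by Theorem~\ref{T:degreoneSCI}, and the high-degree regime ($\deg(X)\geq 4$, $n\geq 3$) handled by \cite[Theorem 3.6]{Fig23} together with Jouanoulou's classification of $\Folx{\p{n+1}}{}{1}$. The paper presents this argument in the paragraph immediately preceding the corollary and simply declares the result ``immediate,'' whereas you spell out the case coverage and the transport of components via diagram~\eqref{E:logmap} a bit more explicitly.
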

}

\begin{corollary}\label{C:resultado-Mateus}
Every codimension-one foliation of degree one on a smooth hypersurface of dimension at least four extends to $\p{n+1}$.
\end{corollary}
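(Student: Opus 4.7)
The plan is to combine the classification of irreducible components of $\Folx{X}{}{1}$ established in Corollary~\ref{C:degreonecomp} with the extension property for logarithmic components established in Proposition~\ref{P:logext}. Since both facts are already in hand, this reduces the claim to a bookkeeping argument rather than new geometry.

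First, I would invoke Corollary~\ref{C:degreonecomp}, which applies whenever $n\geq 4$ (the hypothesis on $\deg(X)$ is vacuous in this range because $c_1(TX)=n+2-\deg(X)\geq 2$ always fits within the scope of Theorem~\ref{T:degreoneSCI} for $\deg(X)\leq n$, while the remaining case $\deg(X)\geq n+1$ is covered by \cite[Theorem~3.6]{Fig23} and the fact that foliations on $\p{n+1}$ of degree one are themselves logarithmic by Jouanolou's classification). The upshot is that the space $\Folx{X}{}{1}$ is the union of the two logarithmic components
\[
\Folx{X}{}{1}=\Log(X;1,2)\cup\Log(X;1,1,1).
\]

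Next, given any codimension-one foliation $\F$ on $X$ of degree one, $\F$ lies in one of these two logarithmic components. By Proposition~\ref{P:logext}, every element of a logarithmic component on a smooth complete intersection extends to the ambient projective space; in particular, $\F$ is the restriction to $X$ of a logarithmic foliation on $\p{n+1}$ belonging to the corresponding component $\Log(\p{n+1};1,2)$ or $\Log(\p{n+1};1,1,1)$.

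The only conceivable obstacle would be ensuring that the extension produced by Proposition~\ref{P:logext} is genuinely transversal to $X$ in codimension one, so that its restriction really recovers $\F$. This, however, is built into the proof of that proposition: the limiting $1$-form $\omega$ on $\p{n+1}$ is constructed precisely so that its restriction has singular locus of codimension at least two, and otherwise it would not define $\F$. Hence no further argument is needed and the corollary follows immediately.
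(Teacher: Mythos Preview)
Your proposal is correct and follows exactly the paper's approach: the paper's proof is the single line ``This follows by Corollary~\ref{C:degreonecomp} and Proposition~\ref{P:logext}.'' Your additional paragraphs re-deriving why Corollary~\ref{C:degreonecomp} applies for $n\geq 4$ and discussing transversality are unnecessary (the former is already part of the hypothesis of that corollary, and the latter is already built into Proposition~\ref{P:logext}), but they are not wrong.
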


\begin{proof}
This follows by Corollary \ref{C:degreonecomp} and Proposition \ref{P:logext}.
\end{proof}


{
\begin{remark}
It is worth noting that Corollaries~\ref{C:degreonecomp} and~\ref{C:resultado-Mateus}, in the case of quadrics of dimension at least~4 or hypersurfaces of degree greater than~3, were proved in \cite[Theorem~2.2.5 and Corollaries~4.4.5 and~4.4.6]{TeseMateus}.
\end{remark}}

To complete the classification of $\Folx{X}{1}{1}$ for any smooth projective hypersurface $X$ of dimension $n\geq 3$, we only need to analyze the case of cubic hypersurfaces of dimension three. The case of a quadric threefold was described in \cite[Theorem 5.2]{LPT13}. In addition to the logarithmic components, there is an irreducible component whose general element is given by an action of the affine group $\mathfrak{aff}(\C)$. Furthermore, Corollary \ref{C:degreonecomp} provides us with the classification for the other hypersurfaces.

\subsubsection{Foliations on a cubic threefold}
Let $X\subset \p4$ be a smooth cubic hypersurface and $\F$ a codimension-one degree-one foliation on $X$ given by $\omega_0 \in H^0(X,\Omega_X^1(3))$. {The natural restriction map
\[
H^0(\p4,\Omega_\p4^1(3)) \lra H^0(X,\Omega_X^1(3))
\]
is an isomorphism, see \cite[Lemma 3.1]{Fig23}. Then $\omega_0$} extends to (a unique!) 1-form $\omega \in H^0(\p4,\Omega_\p4^1(3))$ defining a degree-one distribution on $\p4$. If $\omega$ is integrable then $\F$ is the restriction of a degree-one foliation on $\p4$, thus $\F$ belongs to either $\Log(X;2,1)$ or $\Log(X;1,1,1)$. We will prove this is always the case. 

\begin{theorem}\label{T:ext-deg1-cub3fold}
A codimension-one foliation of degree one on a smooth cubic threefold $X \subset \p4$ extends to a degree-one codimension-one foliation on $\p4$.
\end{theorem}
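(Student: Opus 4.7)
The plan is to apply Corollary \ref{C:deg one} to $\F$ and show, in both resulting cases, that $\F$ lies in one of the logarithmic components $\Log(X;1,2)$ or $\Log(X;1,1,1)$. Since these components extend from $\p4$ by Proposition \ref{P:logext}, this yields the desired extension of $\F$. The cubic threefold satisfies $\dim X = 3$, $\Pic(X) \cong \ZZ$, and $c_1(TX) = 2$, so Corollary \ref{C:deg one} applies and gives two alternatives: either (A) $\F$ is defined by a closed rational 1-form without codimension-one zeros, or (B) $\F$ admits a codimension-two subfoliation $\G$ of degree zero.

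In case (A), since $X$ is a simply connected Fano threefold with Picard rank one, I would invoke \cite[Lemma 2.5]{LPT13} to conclude that $\F \in \Log(X;1,2) \cup \Log(X;1,1,1)$, and Proposition \ref{P:logext} then completes this case. In case (B), tracing the derivation of Corollary \ref{C:deg one} back through Theorem \ref{T:general-cov-by-lines} and Corollary \ref{C:rat-int-sci} shows that $\F$ is in fact algebraically integrable (the subfoliation arises from reducible generic fibers of the first integral). Simple connectedness of $X$ then provides a rational first integral $\rho \colon X \dashrightarrow \p{1}$, which by \cite[Theorem 3.3]{LPT13} has at most two multiple fibers. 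Writing $\rho = \prod_i h_i^{\alpha_i}$ for sections $h_i \in H^0(X,\OX(d_i))$ and weights $\alpha_i$ satisfying $\sum_i \alpha_i d_i = 0$, one may take as defining form
\[
\omega_0 = \Bigl(\prod_i h_i\Bigr) \sum_i \alpha_i\, \frac{dh_i}{h_i} \in H^0\bigl(X, \Omega_X^1\bigl(\textstyle\sum d_i\bigr)\bigr).
\]
The requirement $\deg(\F)=1$ forces $\sum d_i = 3$, and the only partitions of $3$ into positive integers supporting a nontrivial weight relation are $(2,1)$ and $(1,1,1)$. Hence $\F$ again lies in $\Log(X;1,2) \cup \Log(X;1,1,1)$, and Proposition \ref{P:logext} once more furnishes the extension.

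The main delicacy lies in case (B): one must verify that the partition analysis is exhaustive, ruling out any exotic algebraically integrable degree-one foliation on the cubic threefold that would fall outside the two listed logarithmic components. This exhaustiveness is ensured by combining the bound on multiple fibers from \cite[Theorem 3.3]{LPT13} with the strict degree constraint $\sum d_i = 3$, which together force $\rho$ into one of the two listed normal forms. An alternative, more direct route, would be to argue that the integrability obstruction $\omega \wedge d\omega \in H^0(\p4, \Omega^3_{\p4}(6)) \cong H^0(\p4, T_{\p4}(1))$ must vanish because its restriction to $X$ lies in $H^0(X, T_X(1))$ inside $H^0(X, T_{\p4}(1)|_X)$ via the normal bundle sequence $0 \to T_X(1) \to T_{\p4}(1)|_X \to \OO_X(4) \to 0$; however, since this kernel is $10$-dimensional, the classification route through Corollary \ref{C:deg one} described above seems more tractable.
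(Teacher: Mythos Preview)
Your argument in case (A) is fine and parallels the use of \cite[Lemma 2.5]{LPT13} in the proof of Theorem~\ref{T:degreoneSCI}. The difficulty is entirely in case (B), and there your argument has a real gap.

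You correctly trace back through Corollary~\ref{C:deg one} to see that in case (B) the foliation $\F$ is algebraically integrable. But your next step --- invoking \cite[Theorem~3.3]{LPT13} to bound the multiple fibers of a first integral $\rho$ and then writing a logarithmic form with $\sum d_i = 3$ --- is not justified. In the proof of Corollary~\ref{C:rat-int-sci}, that multiple-fiber bound is applied only under the hypothesis that \emph{all} fibers of $\rho$ are irreducible, which is exactly what allows one to write $\rho^{-1}(0)=pH_0$ and $\rho^{-1}(\infty)=qH_\infty$ with $H_0,H_\infty$ prime and to conclude that the resulting closed $1$-form has empty zero divisor. Being in case (B) rather than case (A), via the dichotomy of Proposition~\ref{P:rat-int-or-sub}, means precisely that some fiber of $\rho$ is reducible, so this argument is unavailable. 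Concretely, if a fiber over $c\neq 0,\infty$ has a component of multiplicity $m>1$, then your $\omega_0$ vanishes to order $m-1$ along it, and the implication ``$\deg(\F)=1\Rightarrow \sum d_i=3$'' breaks down. In Theorem~\ref{T:degreoneSCI} (the case $n\geq4$) this obstacle is bypassed not by analyzing $\rho$, but by extending the codimension-two subfoliation $\G$ to the ambient projective space via Corollary~\ref{C:Ext2codim}; that extension result requires $n\geq4$ and does not cover the cubic threefold.

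Ironically, the ``alternative, more direct route'' you sketch and then set aside is exactly what the paper does. The paper lifts the defining form to $\omega\in H^0(\p4,\Omega^1_{\p4}(3))$ and identifies the obstruction $\omega\wedge d\omega$ with a vector field $v$ annihilating the cubic $f$. Exactness of the Koszul complex on $\nabla f$ forces $v=\sum_{i,j}a_{ij}(\partial_i f)\partial_j$ with $(a_{ij})$ skew-symmetric; this is your $10$-dimensional kernel. Rather than being intractable, this space is reduced by congruence normal form to just two cases, and in each the further identities $\iota_v\omega=0$ and $\iota_v d\omega=0$ (coming from $\iota_v(\omega\wedge d\omega)=0$ together with $\mathrm{div}\,v=0$) force a contradiction. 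So the direct route is precisely what closes the gap your structure-theorem approach leaves open.
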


\begin{proof}
Let $\F$ be a codimension-one foliation of degree one on $X$,   and $\omega \in H^0(\p4,\Omega_\p4^1(3))$ be such that {$i^*\omega$ defines $\F$, for $i\colon X \hookrightarrow \p4$ the inclusion}. Suppose by contradiction that $\omega$ is not integrable, that is, $\omega\wedge d\omega \neq 0$. Then there exists a polynomial vector field $v \neq 0$ such that $\omega \wedge d\omega = \iota_{\rad}\iota_v (dx_0\wedge\dots\wedge dx_4)$, where $\rad = \sum_{j=0}^4x_j \del{ x_j}$ is the radial vector field. Let $f \in \CC[x_0, \dots, x_4]_3$ be a polynomial defining $X$. {Since $i^*(\omega\wedge d\omega)  = 0$, we have that $v(f) = fh$ for some polynomial $h$. Then, up to changing $v\mapsto v - \frac{1}{3}h\,\rad$, we may assume that $v(f) = 0$.} Note that, since $X$ is smooth, the partial derivatives of $f$ form a regular sequence. Hence, the Koszul complex associated with the gradient $\nabla (f)$ is exact, and $v$ must be of the form
\begin{equation} \label{E:campo-cubica}
    v = \sum_{i,j} \frac{\partial f}{\partial x_i} a_{ij} \del{ x_j},
\end{equation}
where $a_{ji} = -a_{ij}\in \CC$.  The action of  a change of coordinates by $\phi \in \GL_5$ transforms $v$ as follows:
\begin{align*}
    \phi_* v &= \sum_{i,j,l} \phi_{jl} \frac{\partial f}{\partial x_i}\circ \phi^{-1} a_{il} \del{ x_j} =   \sum_{i,j,l,m} \phi_{jl} \frac{\partial (f\circ \phi^{-1})}{\partial x_m}\phi_{mi}  a_{il} \del{ x_j} \\
    &=  \sum_{i,j}  \frac{\partial (f\circ \phi^{-1})}{\partial x_i}b_{ij} \del{ x_j},
\end{align*}
where $(b_{ij}) = \phi (a_{ij}) \phi^T$. Thus, up to a linear change of coordinates, we may assume that the matrix $(a_{ij})$ is in congruence normal form, yielding two possibilities for $v$:
\[
v_1 = \frac{\partial f}{\partial x_{1}} \frac{\partial }{\partial x_{0}}- \frac{\partial f}{\partial x_{0}} \frac{\partial }{\partial x_{1}} + \frac{\partial f}{\partial x_{3}} \frac{\partial }{\partial x_{2}}- \frac{\partial f}{\partial x_{2}} \frac{\partial }{\partial x_{3}} \quad \text{or} \quad  v_2 = \frac{\partial f}{\partial x_{1}} \frac{\partial }{\partial x_{0}}- \frac{\partial f}{\partial x_{0}} \frac{\partial }{\partial x_{1}}.
\]

We will show that none of those cases are possible. By construction,
\[
0 =  \iota_v (\omega\wedge d\omega) =  (\iota_v\omega)d\omega - \omega \wedge \iota_v d\omega,
\]
whence $\iota_v\omega = 0$, otherwise we would get $\omega \wedge d\omega = 0$. In particular, we get $\omega \wedge \iota_v d\omega =0$, hence $\iota_v d\omega = h\omega$ for some degree-one polynomial. In both cases of interest, the vector fields have vanishing divergence. Hence, $d\omega \wedge d\omega  = 3\iota_v(dx_0\wedge\dots\wedge dx_4)$. Moreover,
\[
0 = \iota_v(d\omega\wedge d\omega) = 2 (\iota_v d\omega)\wedge d\omega = 2h\, \omega \wedge d\omega.
\]
Therefore, $h = 0$ and $\iota_v d\omega = 0$. We will use this fact to derive a contradiction. 

\medskip
\noindent {\bf Case 1:} Note that the coefficients of $v_1$ form a regular sequence of length $4$, hence the associated Koszul complex is exact for polynomial $k$-forms with $2\leq k \leq 4$. Thus $\iota_{v_1} d\omega = 0$ implies that there exists a homogeneous $3$-form $\theta$ such that $d\omega = \iota_v \theta$. However, this is absurd since such $\theta$ would have coefficients of negative degree. Hence, $d\omega = 0$ which is also absurd.

{
\medskip
\noindent {\bf Case 2:} For $v_2$ the equation $\iota_{v_2}\omega =0$ becomes $a_0\frac{\partial f}{\partial x_{1}} - a_1\frac{\partial f}{\partial x_{0}} = 0$. Since 
$\frac{\partial f}{\partial x_{0}}, \frac{\partial f}{\partial x_{1}}$ is a regular sequence and $\deg a_j = 2$, there exists $\lambda \in \CC$ such that 
\[
\omega = \lambda \frac{\partial f}{\partial x_{0}} dx_0 + \lambda \frac{\partial f}{\partial x_{1}} dx_1 + a_2\,dx_2 +a_3\,dx_3 +a_4\,dx_4.
\]
Taking the exterior derivative yields 
\begin{align*}
    d\omega &= \sum_{j\geq 2} \left(-\lambda \frac{\partial^2 f}{\partial x_{j} \partial x_{0}} + \frac{\partial a_j}{\partial x_{0}} \right)dx_0 \wedge dx_j + \sum_{j\geq 2} \left(-\lambda \frac{\partial^2 f}{\partial x_{j} \partial x_{1}} + \frac{\partial a_j}{\partial x_{1}} \right)dx_1 \wedge dx_j + \phi 
\end{align*}
where $\eta$ only depends on $dx_j$ for $j\geq 2$. Since $\frac{\partial f}{\partial x_{0}}, \frac{\partial f}{\partial x_{1}}$ is a regular sequence, it does not have linear syzygies. Then  $\iota_vd\omega = 0$ implies $d\omega -\phi =0$. In particular, for $i = 0,1$ and $j=2,3,4$,
\[
\frac{\partial a_j}{\partial x_{i}} = \lambda \frac{\partial^2 f}{\partial x_{i} \partial x_{j}}.
\]
Let $\eta = \omega - \lambda df$. Then the Lie derivative of $\eta$ in the direction of $\frac{\partial }{\partial x_{0}}$ vanishes for $i=0,1$. That means $\eta$ does not depend on $x_0$ or $x_1$. Note that $0 = \iota_{\rad}\omega = 3\lambda \,f + \iota_{\rad}\eta$, hence $\iota_{\rad}\eta = -3\lambda\,f$. Since $f$ depends on $x_0$ but $\eta$ does not, we have that $\lambda =0$ and $\omega = \eta$. In this case, $\eta$ depends on $3$ variables and $\iota_{\rad}\eta =0$. In particular, we have $\eta \wedge d\eta = 0$. However, this contradicts the choice of $\omega$.  
}
\end{proof}

\bibliographystyle{abbrvurl}
\bibliography{references}{}

@Article{LPT13,
 Author = {Loray, Frank and Pereira, Jorge Vit{\'o}rio and Touzet, Fr{\'e}d{\'e}ric},
 Title = {Foliations with trivial canonical bundle on {Fano} 3-folds},
 FJournal = {Mathematische Nachrichten},
 Journal = {Math. Nachr.},
 ISSN = {0025-584X},
 Volume = {286},
 Number = {8-9},
 Pages = {921--940},
 Year = {2013},
 Language = {English},
 DOI = {10.1002/mana.201100354},
 Keywords = {37F75,14J45,53D17},
 zbMATH = {6192105},
 Zbl = {1301.37032}
}

@Article{AD-fano,
 Author = {Araujo, Carolina and Druel, St{\'e}phane},
 Title = {On {Fano} foliations},
 FJournal = {Advances in Mathematics},
 Journal = {Adv. Math.},
 ISSN = {0001-8708},
 Volume = {238},
 Pages = {70--118},
 Year = {2013},
 Language = {English},
 DOI = {10.1016/j.aim.2013.02.003},
 Keywords = {14M22,37F75,14J45,14J26,32Q99},
 zbMATH = {6238943},
 Zbl = {1282.14085}
}

@InCollection{AD-fano2,
 Author = {Araujo, Carolina and Druel, St{\'e}phane},
 Title = {On {Fano} foliations. {II}},
 BookTitle = {Foliation theory in algebraic geometry. Proceedings of the conference, New York, NY, USA, September 3--7, 2013},
 ISBN = {978-3-319-24458-7; 978-3-319-24460-0},
 Pages = {1--20},
 Year = {2016},
 Publisher = {Cham: Springer},
 Language = {English},
 DOI = {10.1007/978-3-319-24460-0_1},
 Keywords = {14J45,37F75,14M22},
 zbMATH = {6584115},
 Zbl = {1382.14012}
}

@Article{BFM,
 Author = {Benedetti, Vladimiro and Faenzi, Daniele and Muniz, Alan},
 Title = {Codimension one foliations on homogeneous varieties},
 FJournal = {Advances in Mathematics},
 Journal = {Adv. Math.},
 ISSN = {0001-8708},
 Volume = {434},
 Pages = {45},
 Note = {Id/No 109332},
 Year = {2023},
 Language = {English},
 DOI = {10.1016/j.aim.2023.109332},
 Keywords = {32M25,14J10},
 zbMATH = {7765300},
 Zbl = {1528.32033}
}

@Article{LPT-def,
 Author = {Loray, Frank and Pereira, Jorge Vitorio and Touzet, Fr{\'e}d{\'e}ric},
 Title = {Deformation of rational curves along foliations},
 FJournal = {Annali della Scuola Normale Superiore di Pisa. Classe di Scienze. Serie V},
 Journal = {Ann. Sc. Norm. Super. Pisa, Cl. Sci. (5)},
 ISSN = {0391-173X},
 Volume = {21},
 Pages = {1315--1331},
 Year = {2020},
 Language = {English},
 DOI = {10.2422/2036-2145.201712_010},
 Keywords = {32M25,37F75},
 zbMATH = {7373248},
 Zbl = {1477.32040}
}

@article{velazquez2022toric,
  title={Toric foliations with split tangent sheaf},
  author={Velazquez, Sebasti{\'a}n},
  journal={Bulletin des Sciences Math{\'e}matiques},
  volume={175},
  pages={103099},
  year={2022},
  publisher={Elsevier}
}

@article{AMV23-fol-on-toric,
url = {https://doi.org/10.1515/forum-2022-0265},
title = {Rational pullbacks of toric foliations},
author = {Javier Gargiulo Acea and Ariel Molinuevo and Sebastián Velazquez},
pages = {843--861},
volume = {35},
number = {3},
journal = {Forum Mathematicum},
doi = {doi:10.1515/forum-2022-0265},
year = {2023},
lastchecked = {2026-07-27}
}

@misc{MP25-positive-char,
      title={The space of foliations on projective spaces in positive characteristic}, 
      author={Wodson Mendson and Jorge Vitório Pereira},
      year={2025},
      eprint={2504.12910},
      archivePrefix={arXiv},
      primaryClass={math.AG},
      url={https://arxiv.org/abs/2504.12910}, 
}

@article{mendson2024codimension,
  title={Codimension one foliations in positive characteristic},
  author={Mendson, Wodson and Pereira, Jorge Vit{\'o}rio},
  journal={Journal of the Institute of Mathematics of Jussieu},
  volume={23},
  number={4},
  pages={1705--1750},
  year={2024},
  publisher={Cambridge University Press}
}

@article{Fig23,
     author = {Figueira, Mateus Gomes},
     title = {Extensions and restrictions of holomorphic foliations},
     journal = {Annales de la Facult\'e des sciences de Toulouse : Math\'ematiques},
     pages = {981--995},
     publisher = {Universit\'e Paul Sabatier, Toulouse},
     volume = {Ser. 6, 33},
     number = {4},
     year = {2024},
     doi = {10.5802/afst.1792},
     language = {en},
     url = {https://afst.centre-mersenne.org/articles/10.5802/afst.1792/}
}

@article{CLLPT7,
  TITLE = {{Complex codimension one singular foliations and Godbillon-Vey sequences}},
  AUTHOR = {Cerveau, D. and Lins Neto, A. and Loray, F. and Pereira, J. V. and Touzet, F.},
  URL = {https://hal.science/hal-00002095},
  JOURNAL = {{Moscow Mathematical Journal}},
  PUBLISHER = {{Independent University of Moscow }},
  VOLUME = {7},
  YEAR = {2007},
  KEYWORDS = {complex codimension 1 foliation ; algebraic reduction ; Godbillon-Vey sequence ; transversely projective foliation},
  PDF = {https://hal.science/hal-00002095v2/file/GodbillonVey.pdf},
  HAL_ID = {hal-00002095},
  HAL_VERSION = {v2},
}

@Book{Deb-book,
 Author = {Debarre, Olivier},
 Title = {Higher-dimensional algebraic geometry},
 FSeries = {Universitext},
 Series = {Universitext},
 ISSN = {0172-5939},
 ISBN = {0-387-95227-6},
 Year = {2001},
 Publisher = {New York, NY: Springer},
 Language = {English},
 Keywords = {14-01,14Jxx,14E30,14C20},
 zbMATH = {1634463},
 Zbl = {0978.14001}
}

@Article{ACM-dist,
 Author = {Araujo, Carolina and Corr{\^e}a, Mauricio and Massarenti, Alex},
 Title = {Codimension one {Fano} distributions on {Fano} manifolds},
 FJournal = {Communications in Contemporary Mathematics},
 Journal = {Commun. Contemp. Math.},
 ISSN = {0219-1997},
 Volume = {20},
 Number = {5},
 Pages = {28},
 Note = {Id/No 1750058},
 Year = {2018},
 Language = {English},
 DOI = {10.1142/S0219199717500584},
 Keywords = {32M25,32S65,14J45},
 zbMATH = {6911697},
 Zbl = {1401.32018}
}

@Article{Med,
 Author = {de Medeiros, Airton S.},
 Title = {Singular foliations and differential {{\(p\)}}-forms},
 FJournal = {Annales de la Facult{\'e} des Sciences de Toulouse. Math{\'e}matiques. S{\'e}rie VI},
 Journal = {Ann. Fac. Sci. Toulouse, Math. (6)},
 ISSN = {0240-2963},
 Volume = {9},
 Number = {3},
 Pages = {451--466},
 Year = {2000},
 Language = {English},
 DOI = {10.5802/afst.966},
 Keywords = {58A10,32S65},
 zbMATH = {1690531},
 Zbl = {0997.58001}
}

@Book{Kollar-book,
 Author = {Koll{\'a}r, J{\'a}nos},
 Title = {Rational curves on algebraic varieties},
 FSeries = {Ergebnisse der Mathematik und ihrer Grenzgebiete. 3. Folge},
 Series = {Ergeb. Math. Grenzgeb., 3. Folge},
 ISSN = {0071-1136},
 Volume = {32},
 ISBN = {3-540-60168-6},
 Year = {1995},
 Publisher = {Berlin: Springer-Verlag},
 Language = {English},
 Keywords = {14E30,14J26,14-02,14C05,14J45,14M20,14C40,14H10,14J10},
 zbMATH = {833161},
 Zbl = {0877.14012}
}

@Article{Atiyah-conn,
 Author = {Atiyah, Michael F.},
 Title = {Complex analytic connections in fibre bundles},
 FJournal = {Transactions of the American Mathematical Society},
 Journal = {Trans. Am. Math. Soc.},
 ISSN = {0002-9947},
 Volume = {85},
 Pages = {181--207},
 Year = {1957},
 Language = {English},
 DOI = {10.2307/1992969},
 zbMATH = {3128044},
 Zbl = {0078.16002}
}

@book{Deserti-Cerveau,
     author = {D\'eserti, Julie and Cerveau, Dominique},
     title = {Feuilletages et actions~de~groupes sur~les~espaces~projectifs},
     series = {M\'emoires de la Soci\'et\'e Math\'ematique de France},
     publisher = {Soci\'et\'e math\'ematique de France},
     number = {103},
     year = {2005},
     doi = {10.24033/msmf.415},
     mrnumber = {2200857},
     zbl = {1107.37037},
     language = {fr},
     url = {http://www.numdam.org/item/MSMF_2005_2_103__1_0/}
}

@book{JP-Jouanoulou79,
author = {Jouanolou, Jean-Pierre.},
address = {Berlin, Heidelberg},
booktitle = {Equations de Pfaff algebriques},
edition = {1st ed. 1979.},
isbn = {3-540-35344-5},
keywords = {Algebra},
language = {fre;eng},
publisher = {Springer Berlin Heidelberg},
series = {Lecture Notes in Mathematics, 708},
title = {Equations de Pfaff algebriques },
year = {1979},
}

@article {Calvo94,
    AUTHOR = {Calvo-Andrade, Omegar.
              },
     TITLE = { Irreducible components of the space of holomorphic foliations},
   JOURNAL = {Math. Ann.},
  FJOURNAL = {Mathematische Annalen},
    VOLUME = {299},
      YEAR = {1994},
     
       DOI = {10.1007/BF01459811},
       URL = {https://doi.org/10.1007/BF01459811},
}

@article{CL-NE,
author = {Cerveau, D. and Lins Neto, A. and Edixhoven, S.J.},
year = {2001},
month = {10},

title = {Pull-back components of the space of holomorphic foliations on {$C{\Pj}(n)$}, {$n\geq 3$}},
volume = {10},
journal = {Journal of Algebraic Geometry}
}

@article {LNC-96,
    AUTHOR = {Cerveau, D. and Lins Neto, A.},
     TITLE = {Irreducible components of the space of holomorphic foliations
              of degree two in {$C{\Pj}(n)$}, {$n\geq 3$}},
   JOURNAL = {Ann. of Math. (2)},
  FJOURNAL = {Annals of Mathematics. Second Series},
    VOLUME = {143},
      YEAR = {1996},
    
     
      ISSN = {0003-486X},
   MRCLASS = {32L30 (32G99)},
 
MRREVIEWER = {Alain H\'{e}naut},
       DOI = {10.2307/2118537},
       URL = {https://doi.org/10.2307/2118537},
}

@article{kuster2025,
  title={Codimension one foliations on adjoint varieties},
  author={Kuster, Crislaine},
  journal={arXiv preprint arXiv:2503.14446},
  year={2025},
url={https://arxiv.org/abs/2503.14446}
}

@article {CLP-2022,
    AUTHOR = {da Costa, R. Constant and Lizarbe, R. and Pereira, J. V.},
     TITLE = {Codimension one foliations of degree three on projective
              spaces},
   JOURNAL = {Bull. Sci. Math.},
  FJOURNAL = {Bulletin des Sciences Math\'{e}matiques},
    VOLUME = {174},
      YEAR = {2022},
     
      ISSN = {0007-4497},
   MRCLASS = {37F75 (32G13)},
  
       DOI = {10.1016/j.bulsci.2021.103092},
       URL = {https://doi.org/10.1016/j.bulsci.2021.103092},
}

@Article{KobOch,
 Author = {Kobayashi, Shoshichi and Ochiai, Takushiro},
 Title = {Characterizations of complex projective spaces and hyperquadrics},
 FJournal = {Journal of Mathematics of Kyoto University},
 Journal = {J. Math. Kyoto Univ.},
 ISSN = {0023-608X},
 Volume = {13},
 Pages = {31--47},
 Year = {1973},
 Language = {English},
 DOI = {10.1215/kjm/1250523432},
 Keywords = {32L10,32J15,53A20,53C55},
 zbMATH = {3411441},
 Zbl = {0261.32013}
}

@phdthesis{TeseMateus,
    title = {Extensões e Restrições de Folheações Holomorfas},
    author = {Mateus Figueira},
    school = {IMPA},
    year = {2024}, 
    URL = {https://impa.br/wp-content/uploads/2025/02/dout_tese_Mateus-Gomes-Figueira.pdf},
}

\end{document}